\documentclass[11pt, reqno]{amsart}
\usepackage{mathrsfs,amssymb, amscd,amsmath,amsthm}
\usepackage[enableskew,vcentermath]{youngtab}
\usepackage[latin1]{inputenc}
\usepackage{multicol}\multicolsep=0pt
\usepackage{tikz}

\newcommand{\clr}{rgb:black,1;blue,4;red,1}

\newcommand{\bdot}{ node[circle, draw, fill=\clr, thick, inner sep=0pt, minimum width=4pt]{}}

\newcommand{\ob}[1]{\mathsf{#1}}

\newcommand{\B}{\mathcal{B}}

\newcommand{\AB}{\mathcal{AB}}

\newcommand{\lcap}{
\begin{tikzpicture}[baseline = 3pt, scale=0.5, color=\clr]
        \draw[-,thick] (1,0) to[out=up, in=right] (0.53,0.5) to[out=left, in=right] (0.47,0.5);
        \draw[-,thick] (0.49,0.5) to[out=left,in=up] (0,0);
\end{tikzpicture}
}
\newcommand{\lcup}{
\begin{tikzpicture}[baseline = 6pt, scale=0.5, color=\clr]
        \draw[-,thick] (1,1) to[out=down, in=right] (0.53,0.5) to[out=left, in=right] (0.47,0.5);
        \draw[-,thick] (0.49,0.5) to[out=left,in=down] (0,1);
\end{tikzpicture}
}

\newcommand{\swap}{
\begin{tikzpicture}[baseline = 3pt, scale=0.5, color=\clr]
        \draw[-,thick] (0,0) to[out=up, in=down] (1,1);
        \draw[-,thick] (1,0) to[out=up, in=down] (0,1);
\end{tikzpicture}
}

\newcommand{\xdot}{
\begin{tikzpicture}[baseline = 3pt, scale=0.5, color=\clr]
          %\draw[fill=\clr, thick] (-0.05,0.485)--(0.1,0.65)--(0.25,0.5)--(-0.05,0.5);
%           \draw[-,thick] (0.1,1) to[out=up, in=down] (0.1,0);

\draw[-,thick] (0,0) to[out=up, in=down] (0,1);
\draw(0,0.5) \bdot;
\end{tikzpicture}
}

%%%%%%%%%%%%%%%%%%%%%%%%%%%%%%%%%%%%%%%%%%%%%%%%%%%%%%%%%%%%%%%%%%%%%

\hoffset -25truemm              %%
\oddsidemargin=30truemm         %%
\textwidth=155truemm            %%
\voffset -25truemm              %%
\topmargin=25truemm             %%
\textheight=239truemm           %%
\relpenalty=10000
\binoppenalty=10000
\tolerance=500
 \providecommand{\og}{``}
\providecommand{\fg}{''} \providecommand{\smfandname}{and}

%{\hfill\mbox{$\Box$}}

%\def\lm{_{\rm Lex}}
%\def\bm{^{\rm Lex}}

\usepackage{amssymb}
\baselineskip15pt
\usepackage{mathrsfs,amssymb}
\usepackage{multicol}\multicolsep=0pt
\usepackage{pstricks,pst-node}
\usepackage[enableskew,vcentermath]{youngtab}

\usepackage[sort]{cite}
\usepackage{xcolor,graphicx}

\def\crulefill{\leavevmode\leaders\hrule height 1pt\hfill\kern 0pt}
\long\def\QUERY#1{%
\leavevmode\newline%
\noindent$\star\star\star$\thinspace\textsf{Comment/Query}\crulefill\newline%
   \space #1\newline\hbox to 120mm{\crulefill}$\star\star\star$\newline}
%%%%%%%%%%%%%%%%%%%%%%%%%%%%%%%%%%%%%%%%%%%%%%%%%%%%%%%%%%%%%%%%%%%%%%%%%%
\newtheorem{Theorem}{Theorem}[section]%[chapter] theorem number will %continue
\newtheorem{Lemma}[Theorem]{Lemma}

\newtheorem{Prop}[Theorem]{Proposition}

\setcounter{section}{0} \theoremstyle{definition}
\newtheorem{example}[Theorem]{Example}

\newtheorem{Defn}[Theorem]{Definition}

%\renewcommand{equation}[thm]
%\numberwithin{equation}{Theorem}
\numberwithin{equation}{section}
\theoremstyle{definition}
%\newtheorem{Defn}[equation]{Definition}%[section]
%\newtheorem{Example}[equation]{Example}%[section]
%\newtheorem{Remark}[equation]{Remark}
%\theoremstyle{plain}
%\newtheorem{Prop}[equation]{Proposition}
%\newtheorem{Theorem}[equation]{Theorem}
%\newtheorem{Assumption}[equation]{Assumption}
%\newtheorem{Lemma}[equation]{Lemma}
%\newtheorem{Cor}[equation]{Corollary}
%\newtheorem{Point}[equation]{}

% For main results in introduction

%% enumerate with tighter spacing; modified code of G. W. Stewart
\makeatletter
\def\enumerate{\begingroup\ifnum\@enumdepth>3\@toodeep\else
      \advance\@enumdepth\@ne
      \edef\@enumctr{enum\romannumeral\the\@enumdepth}%
      \topsep\z@\parskip\z@
      \list{\csname label\@enumctr\endcsname}
        {\@nmbrlisttrue\let\@listctr\@enumctr
         \parsep\z@\itemsep\z@\topsep\z@
         \setcounter{\@enumctr}{0}
         \def\makelabel##1{\hss\llap{\rm ##1}}
       }\fi}

\makeatother

%%%%%%%%%%%%%%%%%%%%%%%%%%%%%%%%%%%%%%%%%%%%%%%%%%%%%%%%%%%%%%%%%%%%%%%%%%
\let\bar=\overline
\let\epsilon=\varepsilon
\def\({\big(}
\def\){\big)}

\def\0{\underline{0}}

\DeclareMathOperator{\End}{End}

% tableaux

\def\t{\mathfrak t}

\def\Hom{\text{Hom}}

%\def\S{\textbf{s}}

% Only the first | is treated specially.
{\catcode`\|=\active
  \gdef\set#1{\mathinner{\lbrace\,{\mathcode`\|"8000%
                                   \let|\midvert #1}\,\rbrace}}
  \gdef\seT#1{\mathinner{\Big\lbrace\,{\mathcode`\|"8000%
                                   \let|\midverT #1}\,\Big\rbrace}}
}
\def\midvert{\egroup\mid\bgroup}
\def\midverT{\egroup\,\Big|\,\bgroup}

% big sets
% set constructors
\def\Set[#1]#2|#3|{\Big\{\ #2\ \Big| \
           \vcenter{\hsize #1mm\centering #3}\Big\}}

%Jie's Macros
%\def\up{{\boldsymbol\upsilon}}

%{\hfill\mbox{$\Box$}}

%\def\lm{_{\rm Lex}}
%\def\bm{^{\rm Lex}}

%left}}
%right}}
%both}}

%\def\L{\lambda}

%\def\Si{\sigma}

%\def\sh#1#2#3{(\tt_{#1})_{#2}}
%\sh{\l^{(0)}}{s}{}

%\def\dd{{\textit{\textbf c}}}

%\def\PP{\Z^{m|n}}

%\def\C{\mathbb{C}}

%\def\deg#1{{({\rm deg{\sc\,}}#1)}}

%\def\level{{\it level{\sc\,}}}

%\def\low{{\it low\sc\,}}

\def\Hom{{\rm Hom}}

\def\mfg{{\mathfrak g}}

\def\Set{{\rm Set}}

\def\t{\mathfrak t}%
\def\Hom{\text{Hom}}%
\def\textsf#1{{\textit{#1}}}%

%%%%%%%%%%%%%%%%%%%%%%%%%%%%%%%%%%%%%%%%%%%%%%%%%%%%%%%%%%%%%%%%%%%%%%%%%%
% Brauer diagrams

%%%%%%%%%%%%%%%%%%%%%%%%%%%%%%%%%%%%%%%%%%%%%%%%%%%%%%%%%%%%%%%%%%%%%%%%%%

\begin{document}
\title{Blocks of the Brauer category over the complex field}
\author{ Mengmeng Gao, Hebing Rui, Linliang Song}
\address{M.G.  School of Mathematical Science, Tongji University,  Shanghai, 200092, China}\email{1810414@tongji.edu.cn}
\address{H.R.  School of Mathematical Science, Tongji University,  Shanghai, 200092, China}\email{hbrui@tongji.edu.cn}
\address{L.S.  School of Mathematical Science, Tongji University,  Shanghai, 200092, China}\email{llsong@tongji.edu.cn}
%\thanks{Corresponding Author: Linliang Song.}
\thanks{H. Rui is supported  partially by NSFC (grant No.  11971351).  L. Song is supported  partially by NSFC (grant No.  12071346). }
%\date{\today}
\sloppy
\begin{abstract} Let $\B(\delta)$ be the Brauer category over the complex field $\mathbb C$ with the parameter $\delta$.    In non-semisimple case, $\delta$ is an integer, and  each weight space of $(\frac{\delta}2-1)$th semi-infinite wedge space corresponds to either a single  block or a union of two different  blocks of  $\B(\delta)$-lfdmod, the category of the locally finite dimensional representations of  $\B(\delta)$. Furthermore, each block contains an infinite number of irreducible representations of   $\B(\delta)$ and all blocks of  $\B(\delta)$-lfdmod can be obtained in this way.\\

\noindent {\small\textbf{Keywords}: Blocks, Brauer algebra, Brauer category, Categorification}.  \end{abstract}\maketitle
\section{Introduction}
The idea of categorification establishes a nice connection between representations of various associative algebras and Lie theory. A famous example of categorification is the Ariki categorification theorem  which concerns the representations of  cyclotomic Hecke algebras over the complex number field $\mathbb C$~\cite{Ari}. %Let $\Bbbk$ be any algebraic closed field.
Let $\Bbbk$ be an algebraically closed  field.  For any $d\in\mathbb N$ and $q\in \Bbbk^\times$, let $ H^\omega_d(\Bbbk ,q)$ be the cyclotomic Hecke algebra  over    $\Bbbk$ with respect to the dominant integral weight $\omega$ for the Kac-Moody algebra $\mathfrak g$,
where
$$ \mathfrak g=\left\{
                 \begin{array}{ll}
                   \widehat{\mathfrak {sl}}_e, & \hbox{if $q$ is a primitive $e$th root of unity,} \\
                   \mathfrak{sl}_\infty, & \hbox{if $q$ is not a root of unity.}
                 \end{array}
               \right.
$$
Let $H^\omega_d(\Bbbk ,q)$-mod be the category of finite dimensional representations of $H^\omega_d(\Bbbk ,q)$.
The Ariki categorification theorem  says that there is an isomorphism of  $\mathfrak g$-modules
\begin{equation}\label{cateofcyc}
\bigoplus_{d\geq 0}\mathbb C\otimes _{\mathbb Z}K_0(H^\omega_d(\mathbb  C,q)\text{-mod} )\cong V(\omega),
\end{equation}
 where $V(\omega)$ is the integrable highest weight $\mathfrak g$-module with highest weight $\omega$~\cite{Ari}. Similar isomorphism holds for the  cyclotomic Hecke algebra  over  $\Bbbk$~\cite[Theorem~4.18]{BK1}. This categorification  connects various important information on  $H^\omega_d(\Bbbk,q)$-mod to important invariants of the $\mathfrak g$-module $V(\omega)$. For details, see the survey article~\cite{Kle}. In particular,
\begin{itemize}
\item[(H1)]the actions of the Chevalley generators $e_i$, $f_i$ of $\mathfrak g$  correspond to the functors of $i$-induction and $i$-restriction on $ H^\omega_d(\Bbbk,q)$-mod;
\item[(H2)]when $\Bbbk=\mathbb C$, the classes of indecomposable projective $H^\omega_d(\mathbb C,q)$-modules correspond to the  canonical basis of $V(\omega)$;
%\item[(H3)]If $\Bbbk=\mathbb C$, the classes of irreducible  $H^\omega_d(\Bbbk,q)$-modules correspond to dual canonical basis of $V(\omega)$;
\item[(H3)] each single  block of the cyclotomic Hecke algebra $ H^\omega_d(\Bbbk ,q)$ corresponds to a $\mathfrak g$-weight space of    $V(\omega)$.
\end{itemize}
There are  some other connections. For example,   the classes of irreducible  $H^\omega_d(\mathbb C,q)$-modules corresponds to the  dual canonical basis of $V(\omega)$, and the decomposition matrices of cyclotomic Hecke algebras $H^\omega_d(\mathbb  C,q)$ at the root of unity can be computed explicitly ~\cite{Ari}.  When $\omega=\omega_0$, the fundamental weight indexed by 0, such a result was  conjectured by   Lascoux, Leclerc and Thibon~\cite{LLT}.

% Applications of above categorification theorem include determining the decomposition matrices of cyclotomic Hecke algebras over $\mathbb C$ in  (in particular, proving the conjecture of Lascoux, Leclerc and Thibon in \cite{LLT}).

This  paper is intended to supplement the paper \cite{RS2}  on the analog result of the Ariki  categorification theorem for the Brauer category. This is  a part of our project for studying categorifications related to the finite dimensional algebras arising from Schur-Weyl dualities in types $B,C$ and $D$.

Suppose $p$ is the characteristic of $\Bbbk$ and $p\neq 2$.
 Let $\B(\delta)$ be the Brauer category over $\Bbbk$ with the parameter $\delta\in \Bbbk$~\cite{LZ}.  Define complex  Lie algebra
  $$\mathfrak {sl}_\Bbbk= \left\{
                 \begin{array}{ll}
                   \widehat{\mathfrak {sl}}_p, & \hbox{if $p>0$,} \\
                   \mathfrak{sl}_\infty, & \hbox{if $p=0$.}
                 \end{array}
               \right.
$$
such that the Chevalley generators of $\mathfrak {sl}_\Bbbk$ are indexed by
\begin{equation}\label{ii} I:=\frac{\delta-1}{2}+\mathbb Z1_\Bbbk.\end{equation} Let $V(\omega_{\frac{\delta-1}{2}})$ be the
integrable highest weight $\mathfrak {sl}_\Bbbk$-module with highest weight $\omega_{\frac{\delta-1}{2}}$, the fundamental weight indexed by $\frac{\delta-1}{2}$. If $\delta\in \mathbb Z1_\Bbbk$, we  define  $\mathfrak g_{\delta}$ to be the subalgebra of $\mathfrak {sl}_\Bbbk$ generated by all $e_i+f_{-i}$, $i\in I$. In this case, $(\mathfrak{sl}_\Bbbk$, $\mathfrak g_\delta)$ forms a classical symmetric pair.  If $\delta\notin \mathbb Z1_\Bbbk$, let $\mathfrak g_\delta= \mathfrak {sl}_\Bbbk$. In  any case, $V(\omega_{\frac{\delta-1}{2}})$ is a $\mathfrak g_{\delta}$-module.
It is proved in  \cite[Theorem~4.13]{RS2} that the category $\B(\delta)$-lfdmod of  the locally finite dimensional left representations  of $\B(\delta)$  is an upper finite fully  stratified category in the sense of Brundan and Stroppel\cite[Definition~3.42]{BS}. Furthermore, as $\mathfrak g_\delta$-modules,
\begin{equation}\label{jujshxshx}
\mathbb C\otimes K_0(\text{$\B(\delta)$-lfdmod}^{\Delta})\cong V(\omega_{\frac{\delta-1}{2}}),
\end{equation}
where $\B(\delta)$-lfdmod$^{\Delta}$ is the subcategory of $\B(\delta)$-lfdmod in which each  object  admits a finite filtration with subquotients isomorphic to standard objects. If $\delta\notin \mathbb Z1_\Bbbk$, then $\B(\delta)$-lfdmod is Morita equivalent to
$\bigoplus_{d\geq0}\Bbbk S_d\text{-mod}$~\cite[Corollary~3.15]{RS2}, where $S_d$ is the symmetric group on $d$ letters.
In this case,  \eqref{jujshxshx} is  equivalent to \eqref{cateofcyc} for symmetric groups.
When $\delta\in \mathbb Z1_\Bbbk$, we have  analog correspondences of (H1),(H2) as follows.
\begin{itemize}
\item [(B1)] the actions of the   generators $e_i+f_{-i}$ of $\mathfrak g_\delta$  correspond to the functors of $i$-induction   on  $\B(\delta)$-lfdmod~\cite[Theorem~4.3]{RS2};
\item [(B2)] If $\Bbbk=\mathbb C$, the  classes of indecomposable projective modules in $\B(\delta)$-lfdmod correspond to quasi-canonical basis  of $V(\omega_{\frac{\delta-1}{2}})$~\cite[Theorem~5.6]{RS2}.
\end{itemize}

We did not obtain  an analog result of (H3) for  $\B(\delta)$ in \cite{RS2}. Using the $\mathfrak g_\delta$-weight space  of    $V(\omega_{\frac{\delta-1}{2}})$  under the categorification in \eqref{jujshxshx}, we give  a partial result on blocks of $\B(\delta)$-lfdmod  in \cite[Lemma~4.9(2)]{RS2}. The  aim  of the paper is to give the following results on
  blocks of   $\B(\delta)$-lfdmod over $\mathbb C$,   an analog result of (H3).  Explicit description can be found in  Theorem~\ref{main}.
\medskip
\begin{enumerate}
\item [(B3)]\begin{itemize}\item If  $\delta$ is odd, then each  $\mathfrak g_\delta$-weight space of  $V(\omega_{\frac{\delta-1}{2}})$ corresponds to a single block of $\B(\delta)$-lfdmod;
\item If  $\delta$ is even, then any $\mathfrak g_\delta$-weight space of  $V(\omega_{\frac{\delta-1}{2}})$   corresponds to either  a single block or a  union of two different  blocks of $\B(\delta)$-lfdmod.
    %See Theorem~\ref{main}(b)(2) for explicit description.
\end{itemize}\end{enumerate}
In any case, each block contains an infinite number of irreducible representations of $\B(\delta)$.

The content of this note is organized as follows. In section~2, we give some results on $\mathfrak g_\delta$-weight spaces of certain  semi-infinite wedge spaces in \cite{BW}. In section~3, we prove results on the blocks of $\B(\delta)$-lfdmod over $\mathbb C$.
Finally, we give a relationship between the $\mathfrak g_\delta$-weight spaces and the ``central blocks'' defined by a subalgebra of the center of $\B(\delta)$. This is motivated by Brundan-Vargas's work  on partition categories~\cite{BV}.
\vskip0.5cm
\textbf{Acknowledgement}: We thanks the referee for his/her detailed comments on this paper.

\section{ Semi-infinite wedge spaces  }
From here to the end of this paper, the ground field $\Bbbk$ is the complex number field $\mathbb C$.  All categories, algebras etc will be defined over  $\mathbb C$ and  all modules are left modules.

\subsection{Lie algebra $\mathfrak{sl}_\infty$}\label{lie}  Fix an integer $\delta$ and define
 \begin{equation}\label{io} \mathbb I=\frac12+I, \end{equation}  where $ I$ is given in \eqref{ii}. So,
 $$\mathbb I =\begin{cases} \mathbb Z &\text{if $\delta$ is even,}\\
 \frac12+\mathbb Z &\text{if $\delta$ is odd.}\end{cases}$$  Consider the  Cartan matrix $(a_{i,j})_{i,j\in I}$
such that $$
a_{i,j}=\begin{cases}
            2 & \text{if $i=j$, } \\
            -1 & \text{if $|i-j|= 1$,} \\
            0 & \text{otherwise.}
          \end{cases}
$$
 Let $\mathfrak{sl}_\infty$ be  the   Kac-Moody algebra associated to $(a_{i,j})_{i,j\in I}$.
  Let $\mathfrak h$ be its  Cartan subalgebra. Define $h_i=[e_i,f_i]$, where $\{e_i, f_i\mid i\in I\}$ is  the set of usual Chevalley generators of  $\mathfrak {sl}_{\infty}$.  Let $\mathfrak h^*$ be the linear dual of $\mathfrak h$. Throughout, we fix the  notations as follows:
\begin{itemize}
\item $P= \{\lambda\in\mathfrak h^*\mid \langle h_i,\lambda \rangle\in \mathbb Z,  \forall i\in I\}=\sum_{i\in  I}\mathbb Z\omega_i$, where $\omega_i$'s are fundamental weights,
 \item $\Pi=\{\alpha_i\mid i\in I\}$,  where  $\alpha_i=\varepsilon_{i-\frac{1}{2}}-\varepsilon_{i+\frac{1}{2}}$ and  $\varepsilon_{i-\frac{1}{2}}=\omega_{i}-\omega_{i-1}$, $\forall i\in I$,
 \item $Q=\sum_{i\in I}\mathbb Z \alpha_i$.
 %\item $\leq$ ,   the usual dominance order on $P$ such that $\lambda\leq \mu$ if $\mu-\lambda\in \mathbb N \Pi$.
\end{itemize}
\subsection{Symmetric pairs}\label{SP} %We have  $I=\mathbb Z$ if    $\delta$ is odd,    and $I=1/2+\mathbb Z$ if $\delta$ is even.
 Consider the graph automorphism $\theta$ of Dynkin diagram associated to $I$  in Figures 1--2:
 \begin{figure}[ht!]
   \label{figure:ji3}
\begin{tikzpicture}
%\draw (-1.5,0) node {$A_{2r}:$};
 \draw[dotted] (-1,0) --(0,0);\draw[dotted] (6,0) --(7,0);
 \draw[dotted]  (0.5,0) node[below]  {$ -m+\frac{1}{2}$} -- (2.5,0) node[below]  {$-\frac{1}{2}$} ;
 \draw (2.5,0)
 -- (3.5,0) node[below]  {$\frac12$};
 \draw[dotted] (3.5,0) -- (5.5,0) node[below] {$m-\frac12$} ;
\draw (0.5,0) node (-m) {$\bullet$};
 \draw (2.5,0) node (-1) {$\bullet$};
%\draw (3,0) node (0) {$\bullet$};
\draw (3.5,0) node (1) {$\bullet$};
\draw (5.5,0) node (m) {$\bullet$};
\draw[<->] (-m.north east) .. controls (3,1) .. node[above] {$\theta$} (m.north west) ;
\draw[<->] (-1.north) .. controls (3,0.5) ..  (1.north) ;
%\draw[<->] (0) edge[<->, loop above] (0);
\end{tikzpicture}
\caption{Dynkin diagram of type $A_{\infty}$ with involution   $\theta$ when $\delta$ is even.}
\end{figure}

\begin{figure}[ht!]

\begin{tikzpicture}
%\draw (-2,0) node {$A_{2r+1}:$};
 \draw[dotted] (-1,0) --(0,0);\draw[dotted] (6,0) --(7,0);
 \draw[dotted]  (0,0) node[below] {$-m$} -- (2,0) node[below] {$-1$} ;
 \draw (2,0) -- (3,0) node[below]  {$0$}
 -- (4,0) node[below] {$1$};
 \draw[dotted] (4,0) -- (6,0) node[below] {$m$} ;
\draw (0,0) node (-m) {$\bullet$};
 \draw (2,0) node (-1) {$\bullet$};
\draw (3,0) node (0) {$\bullet$};
\draw (4,0) node (1) {$\bullet$};
\draw (6,0) node (m) {$\bullet$};
\draw[<->] (-m.north east) .. controls (3,1.5) .. node[above] {$\theta$} (m.north west) ;
\draw[<->] (-1.north) .. controls (3,1) ..  (1.north) ;
\draw[<->] (0) edge[<->, loop above] (0);
\end{tikzpicture}
\caption{Dynkin diagram of type $A_{\infty}$ with involution   $\theta$ when $\delta$ is odd.}
   \label{figure:ji4}
\end{figure}
\noindent Then   $\theta$ sends the $i$th vertex to $-i$th vertex for  any $i\in I$.
 It gives  rise to the involution on   $P$ satisfying
$\theta(\varepsilon_i )=-\varepsilon_{-i}$, for all $ i\in \mathbb I$.
 Thanks to  \cite[(4.19)]{KW}, it
 induces  an automorphism of $\mathfrak {sl}_{\infty} $, denoted by $\theta$  such that
$\theta(e_i)=e_{-i}$, $\theta(f_i)=f_{-i}$, $\theta(h_i)=h_{-i}$ and
 $\langle\theta(h),\alpha_{-i}\rangle=\langle h, \alpha_{i} \rangle$.
Moreover, $\theta$ induces a linear map on $\mathfrak h^*$ such that $\theta(\alpha_i)=\alpha_{-i}$.
Consider the automorphism $\phi$ of $\mathfrak {sl}_\infty$ such that $\phi$ switches $e_i$ and $f_i$ for all admissible $i$ and sends $h$ to $-h$ for any $h\in \mathfrak h$. Then $\phi \circ \theta$  is an automorphism of $\mathfrak{sl}_\infty$ such that
$\phi \circ \theta (e_i)=f_{-i}$, $\phi \circ \theta (f_i)=e_{-i}$ and $\phi \circ \theta(h_i)=-h_{-i}$, $\forall  i\in I$.
Following \cite{BW,ES}, let   \begin{equation}\label{gdelta}
\mathfrak g_\delta=\{g\in\mathfrak {sl}_\infty\mid \phi \circ \theta (g)=g\}.\end{equation} It is  the Lie subalgebra of $\mathfrak{sl}_\infty$ generated by   $\{e_i+f_{-i}\mid i\in I\}$,
 and $(\mathfrak{sl}_\infty$, $\mathfrak g_\delta)$ forms a classical symmetric pair.

\subsection{Semi-infinite wedge space} Let $\mathbb V$ be the  natural representation  of $\mathfrak {sl}_\infty$ over $\mathbb C$.
It is the  $\mathbb C$-space with basis
$\{v_i\mid i\in\mathbb I \}$ such that
\begin{equation}\label{eexheu}
e_i v_a=\delta_{i+\frac{1}{2}, a}v_{a-1}, \quad f_i v_a=\delta_{i-\frac{1}{2},a}v_{a+1}, \forall (i, a)\in I\times \mathbb I.
\end{equation}
Consider  the restricted dual $\mathbb W$ of $\mathbb V$. As a vector space, it  has  dual  basis $\{w_a\mid a\in\mathbb I\}$ such that
 \begin{equation} e_i w_a=\delta_{i-\frac{1}{2}, a}w_{a+1}, \quad f_i w_a=\delta_{i+\frac{1}{2},a}w_{a-1}, \forall (i, a ) \in I\times \mathbb I.\end{equation}
  As a vector of the $\mathfrak {sl}_\infty$-module  $\mathbb V$  (resp., $\mathbb W$),  $v_i$ (resp., $w_i$) has the weight  $\epsilon_i$ (resp., $-\epsilon_i$).
  Restricting $\mathbb V$ (resp.,  $\mathbb W$) to $\mathfrak g_\delta$ yields  a $\mathfrak g_\delta$-module. Bao and Wang~\cite{BW} considered $\mathfrak{sl}_\infty$-modules (and hence $\mfg_\delta$-modules)
$$\textstyle\bigwedge^{\infty}\mathbb V=\bigoplus_{d\in \mathbb I}\bigwedge_d^\infty \mathbb V \text{ and }   \textstyle\bigwedge^{\infty}\mathbb W=\bigoplus_{d\in \mathbb I}\bigwedge_d^\infty \mathbb W.$$ The submodule $\bigwedge_d^{\infty}\mathbb V$
 (resp., $\bigwedge_d^{\infty}\mathbb W$)  is called   the $d$th sector of  semi-infinite wedge space $\bigwedge^{\infty}\mathbb V$ (resp.,$\bigwedge^{\infty}\mathbb W$). Let $\mathbb I^\infty=\{  \mathbf (i_1,i_2,\ldots)\mid   i_k\in \mathbb I,  \forall \text{ positive integers  $k$}\}$. Later on, $(i_1,i_2,\ldots)$ will be denoted by $\mathbf i$.
Define  \begin{equation}\label{id321}
\mathbb I_d^-=\{\mathbf i\in\mathbb I^\infty\mid i_1<i_2<\ldots, i_k=d+k  \text{ for $k\gg0$}\}.
\end{equation} For each $\mathbf i\in \mathbb I_d^-$, define \begin{equation}\label{wbi}w_{\mathbf i}=w_{i_1}\wedge w_{i_2}\wedge\ldots.\end{equation} Then
 $\bigwedge_d^{\infty}\mathbb W$  has basis   $\{w_{\mathbf i}\mid \mathbf i\in \mathbb I_d^-\}$.
%Later on, we always assume that $ d=\frac{\delta}2-1$ when we consider  $\bigwedge_d^{\infty}\mathbb W$.

A partition $\lambda$ of a nonnegative integer $n$ is a weakly decreasing sequence $(\lambda_1, \lambda_2, \ldots)$ of non-negative integers  such that $\sum_j \lambda_j=n$.  Define \begin{equation}\label{lambda} \Lambda=\bigcup_{n=0}^\infty \Lambda_n,\end{equation} where  $\Lambda_n$ is  the set of all partitions of $n$.
 For any $\lambda\in \Lambda$, define
 \begin{equation}\label{ilambda}\mathbf i_{d, \lambda}=(d-\lambda_1+1, d-\lambda_2+2, d-\lambda_3+3, \ldots )\in \mathbb I^\infty.\end{equation}
 Obviously, $\mathbf i_{d, \lambda}\in \mathbb I_d^-$ and the map from $\Lambda$ and $\mathbb I_d^-$
  sending $\lambda$ to $\mathbf i_{d,\lambda}$ is bijective.

 \begin{Defn}\label{phi}   For any $\mathbf i\in \mathbb I_d^-
$, define $\text{wt}_d(\mathbf i):=\text{wt}_d(w_{\mathbf i})=-\sum_{j=1}^\infty\varepsilon_{i_j}$.\end{Defn}
  In fact,
$\text{wt}_d(w_{\mathbf i})$ is  the weight of $w_{\mathbf i}$ when it is considered as a vector of $\mathfrak {sl}_\infty$-module  $\bigwedge_d^{\infty}\mathbb W$.

\subsection{Weyl group of type $D_\infty$}
For $n\geq2$, consider the free abelian group $\bigoplus_{i=1}^n \mathbb Z\delta_i$ on which there is a symmetric bilinear form $(\ ,\ )$  such that
$$(\delta_i,\delta_j)=\delta_{i,j},  \ \ 1\le i, j\le n.$$
Then the root system $R=\{\pm(\delta_{i}-\delta_j), \pm(\delta_{i}+\delta_j) \mid 1\le i<  j\le n\}$. Let $E=R\otimes_{\mathbb Z}  \mathbb R$. For each $\alpha\in R$, the corresponding reflection $s_\alpha$ is defined as
\begin{equation}\label{sr} s_\alpha (v)=v-(v, \alpha)\alpha, \forall v\in E.\end{equation}
Fix the simple root system  $\Pi_n=\{\beta_0,\beta_1, \beta_2, \ldots, \beta_{n-1}\}$, where
  $\beta_i=\delta_i-\delta_{i+1}$,  $ 1\le i\le n-1$, and $\beta_0=-\delta_1-\delta_2$.
The corresponding  Dynkin diagram of type $D_n$ together with the set of simple roots $\Pi_n$ are listed as follows:
\begin{center}
\hskip 3cm \setlength{\unitlength}{0.16in}
\begin{picture}(24,3.5)
\put(8,2){\makebox(0,0)[c]{$\bigcirc$}}
\put(10.4,2){\makebox(0,0)[c]{$\bigcirc$}}
\put(14.85,2){\makebox(0,0)[c]{$\bigcirc$}}
\put(17.25,2){\makebox(0,0)[c]{$\bigcirc$}}
\put(19.4,2){\makebox(0,0)[c]{$\bigcirc$}}
\put(6,3.8){\makebox(0,0)[c]{$\bigcirc$}}
\put(6,.3){\makebox(0,0)[c]{$\bigcirc$}}
\put(8.4,2){\line(1,0){1.55}} \put(10.82,2){\line(1,0){0.8}}
\put(13.2,2){\line(1,0){1.2}} \put(15.28,2){\line(1,0){1.45}}
\put(17.7,2){\line(1,0){1.25}}
\put(7.6,2.2){\line(-1,1){1.3}}
\put(7.6,1.8){\line(-1,-1){1.3}}
\put(12.5,1.95){\makebox(0,0)[c]{$\cdots$}}
%\put(-.5,2){\makebox(0,0)[c]{$\mf{d}$:}}
\put(5.1,0.3){\makebox(0,0)[c]{\tiny$\beta_{0}$}}
\put(5.7,2.8){\makebox(0,0)[c]{\tiny$\beta_{1}$}}
\put(8.2,1){\makebox(0,0)[c]{\tiny$\beta_{2}$}}
\put(17.2,1){\makebox(0,0)[c]{\tiny$\beta_{n-2}$}}
\put(19.3,1){\makebox(0,0)[c]{\tiny$\beta_{n-1}$}}
\end{picture}.
\end{center}
The reflection group $W_n$ generated by $\{s_\beta\mid \beta\in R\}$ is the Weyl group  of type $D_n$. It is the Coxeter group with distinguished generators $s_i:=s_{\beta_i}$, $i=0,1,\ldots, n-1$. In this setting, $ W_n$ is the standard parabolic subgroup of $W_{n+1}$. The required monomorphism sends $s_i\in W_n$ to $s_i\in W_{n+1}$ for all $i, 0\le i\le n-1$.
Let $W_\infty$ be the Weyl group of type $D_\infty$. Any $W_n$ can be considered as a parabolic subgroup of $W_\infty$.

Consider the orthogonal Lie algebra $\mathfrak {so}_{2n}$ for $n\ge 2$. There is a triangular decomposition   $\mathfrak {so}_{2n}=\mathfrak n_n^{-}\oplus \mathfrak t_n\oplus \mathfrak n_n^+$ corresponding to  $\Pi_n$.
Let $\mathfrak t_n^*$ be the linear dual  of $\mathfrak t_n$ with  dual basis $\{\delta_i\mid 1\le i\le n\}$.
Each element $\lambda\in \mathfrak t_n^*$, called a weight, is of form   $\sum_{i=1}^n \lambda_i\delta_i$. $\lambda$  is integral if it is either  $\mathbb Z$-span or $(\mathbb Z+\frac{1}{2})$-span of the $\delta_i$'s. Similarly, we have  $\mathfrak {so}_{2\infty}$ and  $\mathfrak t_\infty^*$, etc.
Each $\lambda\in \mathfrak t_n^*$ can be considered as the element in $\mathfrak t_\infty^*$ by setting $\lambda_j=0$ for all $j\ge n+1$. The Weyl group $W_\infty$ can act on $\mathbb I^\infty$ since
any $\mathbf i\in \mathbb I^\infty$ can be considered as the integral weight $\sum_{j=1}^\infty i_j\delta_j\in \mathfrak t_\infty^*$. Write   \begin{equation}\label{aaa} w( \mathbf i)=\mathbf j, \text{ for any $\mathbf i, \mathbf j\in \mathbb I^\infty$ if $w(\sum_{k=1}^\infty i_k\delta_k)=\sum_{k=1}^\infty j_k\delta_k$.}\end{equation} The following result follows from \eqref{sr}, immediately.

\begin{Lemma} \label{key000} Suppose $\mathbf i\in \mathbb I^\infty$. For all positive integers  $j, k$ with $j<k$, we have  \begin{itemize}\item[(1)]  $s_{\delta_j-\delta_k}(\mathbf i)$ is obtained from $\mathbf i$  by switching $i_j$ and $i_k$ and fixing other terms,
\item[(2)]  $s_{\delta_j+\delta_k}(\mathbf i)$ is obtained from  $\mathbf i$ by replacing  $i_j, i_k$ by $-i_k, -i_j$ and  fixing other terms.\end{itemize}\end{Lemma}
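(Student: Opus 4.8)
The plan is to prove both statements by a direct computation with the reflection formula \eqref{sr}, exactly as the remark preceding the lemma (\textquotedblleft follows from \eqref{sr}, immediately\textquotedblright) anticipates. First I would attach to $\mathbf i$ the integral weight $v=\sum_{l\ge 1} i_l\delta_l\in\mathfrak t_\infty^*$ through the identification in \eqref{aaa}, so that by definition $s_\alpha(\mathbf i)$ is read off from $s_\alpha(v)=v-(v,\alpha)\alpha$. The one conceptual point to settle before computing is that, although $v$ is an infinite expression, each root $\alpha=\delta_j\pm\delta_k$ is supported on just two coordinates; hence both the pairing $(v,\alpha)$ and the vector $s_\alpha(v)$ are well defined and $s_\alpha$ alters at most the $j$th and $k$th coordinates of $v$. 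This is precisely what makes the $W_\infty$-action on $\mathbb I^\infty$ in \eqref{aaa} legitimate.

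For (1), I take $\alpha=\delta_j-\delta_k$. Using orthonormality $(\delta_a,\delta_b)=\delta_{a,b}$, the pairing is $(v,\alpha)=i_j-i_k$, so
\begin{equation*}
s_\alpha(v)=v-(i_j-i_k)(\delta_j-\delta_k).
\end{equation*}
Reading off coefficients, the $\delta_j$-coordinate becomes $i_j-(i_j-i_k)=i_k$, the $\delta_k$-coordinate becomes $i_k+(i_j-i_k)=i_j$, and every other coordinate is unchanged. Translating back through \eqref{aaa}, $s_{\delta_j-\delta_k}(\mathbf i)$ is $\mathbf i$ with $i_j$ and $i_k$ interchanged, which is (1).

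For (2), I take $\alpha=\delta_j+\delta_k$. Now $(v,\alpha)=i_j+i_k$, hence
\begin{equation*}
s_\alpha(v)=v-(i_j+i_k)(\delta_j+\delta_k);
\end{equation*}
the $\delta_j$-coordinate becomes $i_j-(i_j+i_k)=-i_k$, the $\delta_k$-coordinate becomes $i_k-(i_j+i_k)=-i_j$, and the remaining coordinates are fixed. Thus $s_{\delta_j+\delta_k}(\mathbf i)$ replaces $(i_j,i_k)$ by $(-i_k,-i_j)$ and fixes all other terms, giving (2).

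I expect no genuine obstacle: the entire content is the arithmetic of the reflection formula against an orthonormal basis. The only thing warranting any care, and the reason the statement is isolated as a lemma rather than left implicit, is the bookkeeping in the infinite-rank setting, namely confirming that a single reflection touches only the two coordinates $j,k$ so that \eqref{aaa} indeed returns a well-defined element of $\mathbb I^\infty$; once that is noted, both parts are immediate.
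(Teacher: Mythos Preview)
Your proof is correct and follows exactly the approach the paper indicates: the paper provides no detailed argument, simply stating that the lemma \textquotedblleft follows from \eqref{sr}, immediately\textquotedblright, and your direct computation with the reflection formula is precisely the intended verification. Your remark about the well-definedness of the $W_\infty$-action on $\mathbb I^\infty$ is a sensible observation that the paper leaves implicit.
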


\subsection{$\mfg_\delta$-weight spaces of  $\bigwedge_d^\infty \mathbb W$ }\label{hhgweight}  Recall  $P, Q$ in subsection~\ref{lie} and $\theta$ in subsection~\ref{SP}.
Let  $P_\theta=P/Q^\theta$ where $Q^\theta=\{ \theta(\mu)+\mu\mid \mu\in Q\}$. For any $\mathbf i\in \mathbb I_d^-$, $\bar{\text{wt}_d(\mathbf i)}$ is known as a $\mathfrak g_\delta$-weight of $\mathfrak g_\delta$-module  $\bigwedge_d^\infty \mathbb W$~\cite{BW}, where $\bar{\text{wt}_d(\mathbf i)}$ is the image of $\text{wt}_d(\mathbf i)$ in $P_\theta$. So, we have a weight function $\bar{\text{wt}_d}: \mathbb I_d^-\rightarrow P_\theta$ such that $\bar{\text{wt}_d} (\mathbf i)=\bar{\text{wt}_d(\mathbf i)}$.
%Thanks to \eqref{wtlambda}, $\bar{\text{wt}_d}$ can also be considered as a weight function from $\Lambda$ to $P_\theta$.
\begin{Prop}\label{o1} Suppose $ \mathbb I=\frac{1}{2}+\mathbb Z$. For any $\mathbf i, \mathbf j\in \mathbb I_{d}^-$, we have   $ {\bar{\text{wt}_d}(\mathbf i)}= {\bar{\text{wt}_d}(\mathbf j) }$ if and only if  $w (\mathbf i)=\mathbf j$ for some $w\in W_\infty$. \end{Prop}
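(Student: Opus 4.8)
The plan is to translate everything into $\varepsilon$-coordinates and reduce the statement to a parity count. First I would record that, since the fundamental weights $\{\omega_i\}_{i\in I}$ form a $\mathbb Z$-basis of $P$ and $\varepsilon_{i-\frac12}=\omega_i-\omega_{i-1}$, the family $\{\varepsilon_a\mid a\in\mathbb I\}$ is linearly independent in $P$; consequently $Q=\mathbb Z\text{-span}\{\alpha_i\}$ is exactly the set of finite integral combinations $\sum_a c_a\varepsilon_a$ with $\sum_a c_a=0$. Because $\text{wt}_d(\mathbf i)=-\sum_j\varepsilon_{i_j}$ depends only on the value-set $A=\{i_k\}$, and because $\mathbf i,\mathbf j\in\mathbb I_d^-$ agree with $(d+1,d+2,\dots)$ for $k\gg0$, the difference $\text{wt}_d(\mathbf i)-\text{wt}_d(\mathbf j)=\sum_a d_a\varepsilon_a$ is a \emph{finite} sum with $d_a=\#\{k\mid j_k=a\}-\#\{k\mid i_k=a\}\in\{-1,0,1\}$; here $d_a=1$ means $a\in B\setminus A$ and $d_a=-1$ means $a\in A\setminus B$, where $B=\{j_k\}$. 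Thus $\bar{\text{wt}_d}(\mathbf i)=\bar{\text{wt}_d}(\mathbf j)$ iff $\sum_a d_a\varepsilon_a\in Q^\theta$.

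The key computation is an explicit description of $Q^\theta$. Using $\theta(\varepsilon_a)=-\varepsilon_{-a}$, for $\mu=\sum_a c_a\varepsilon_a\in Q$ one gets $\theta(\mu)+\mu=\sum_b(c_b-c_{-b})\varepsilon_b$. I would then observe two things: the coefficient sequence $f_b:=c_b-c_{-b}$ is antisymmetric ($f_{-b}=-f_b$), and since $\sum_a c_a=\sum_{b>0}(c_b+c_{-b})=0$ one has $\sum_{b>0}f_b=-2\sum_{b>0}c_{-b}\in2\mathbb Z$. Conversely any finite antisymmetric integer sequence with $\sum_{b>0}f_b$ even is realized this way. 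Hence $Q^\theta=\{\sum_b f_b\varepsilon_b\mid f_{-b}=-f_b,\ \sum_{b>0}f_b\in2\mathbb Z\}$, so $\bar{\text{wt}_d}(\mathbf i)=\bar{\text{wt}_d}(\mathbf j)$ holds precisely when $d_{-a}=-d_a$ for all $a$ and $\sum_{a>0}d_a$ is even.

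With this in hand, both implications follow from Lemma~\ref{key000}. For the ``if'' direction, $W_\infty$ is generated by the $s_{\delta_j-\delta_k}$ and $s_{\delta_j+\delta_k}$, which by Lemma~\ref{key000} act on a sequence by a transposition (leaving the value-set, hence $\text{wt}_d$, unchanged) and by the double sign-swap $i_j,i_k\mapsto -i_k,-i_j$ (changing $\text{wt}_d$ by the antisymmetric element $(\varepsilon_{i_j}-\varepsilon_{-i_j})+(\varepsilon_{i_k}-\varepsilon_{-i_k})$, whose positive-coefficient sum is even). Since every $w\in W_\infty$ is a finite product of these, $w(\mathbf i)$ differs from $\mathbf i$ by an antisymmetric weight whose positive-coefficient sum is even, i.e.\ lies in $Q^\theta$; thus $w(\mathbf i)=\mathbf j$ forces $\bar{\text{wt}_d}(\mathbf i)=\bar{\text{wt}_d}(\mathbf j)$. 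For the ``only if'' direction, antisymmetry $d_{-a}=-d_a$ says exactly that $A\setminus B=-(B\setminus A)$: the values deleted from $A$ are the negatives of those inserted. Flipping the signs of the finitely many elements of $A\setminus B$ therefore carries the set $A$ onto $B$; a short parity count shows $|A\setminus B|\equiv\sum_{a>0}d_a\equiv0\pmod2$, so the number of sign changes is even. Realizing these sign flips by products of $s_{\delta_j+\delta_k}$ and then re-sorting into increasing order by transpositions $s_{\delta_j-\delta_k}$ produces a finitely supported $w\in W_\infty$ with $w(\mathbf i)=\mathbf j$.

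I expect the parity bookkeeping to be the only real obstacle: the evenness constraint $\sum_{b>0}f_b\in2\mathbb Z$ in $Q^\theta$ is precisely what matches the ``even number of sign changes'' defining the type $D_\infty$ Weyl group $W_\infty$, and getting this divisibility exactly right (as opposed to a type $B/C$ situation, where all sign changes would be allowed) is the crux of the argument. Everything else is linear bookkeeping in the $\varepsilon$-coordinates.
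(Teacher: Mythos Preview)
Your proof is correct and follows essentially the same approach as the paper: both directions reduce to the fact that an element $\sum_a d_a\varepsilon_a$ lies in $Q^\theta$ precisely when the coefficients are antisymmetric ($d_{-a}=-d_a$) with $\sum_{a>0}d_a$ even, which via Lemma~\ref{key000} is exactly the ``even number of sign changes'' characterizing the $W_\infty$-orbit. The only difference is organizational---you first package this as an explicit description of $Q^\theta$ in $\varepsilon$-coordinates, whereas the paper extracts the same two conditions by computing directly with the generators $\alpha_i+\alpha_{-i}$ via the identities \eqref{wtt2} and \eqref{key12345}--\eqref{minues}.
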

\begin{proof} Assume  $\mathbf i=(i_1, i_2, \ldots)\in \mathbb I_d^-$.  Since we are assuming $d\in \mathbb I= \frac12+\mathbb Z$,  $i_j\neq 0$ for all $j\ge 1$. Suppose $w(\mathbf i)=\mathbf j$ for some   $w\in W_\infty$. Since $w$  can be expressed as a product of certain $s_{\delta_k-\delta_l}$'s  and    $s_{\delta_k+\delta_l}$'s,   by Lemma~\ref{key000},
there are some $b_{i,j}\in \mathbb{Z}$ such that
$$\text{wt}_d(\mathbf i)-\text{wt}_d(
\mathbf j)=\sum_{i, j\in\mathbb{ I},i\neq -j}b_{i,j}(\varepsilon_i+\varepsilon_{j}-\varepsilon_{-i}-\varepsilon_{-j}).$$
 We have $\varepsilon_i+\varepsilon_{j}-\varepsilon_{-i}-\varepsilon_{-j}\in Q^\theta$ since
%Suppose $w(\mathbf i)=\mathbf j$ for some $w\in W_\infty$.

\begin{equation}\label{wtt2} \varepsilon_i+\varepsilon_{j}-\varepsilon_{-i}-\varepsilon_{-j}=\begin{cases} \sum_{k=i}^{-j-1}( \alpha_{k+\frac{1}{2}}+ \alpha_{-(k+\frac12)}), & \text{if $i<-j$,}\\ \sum_{k=-j+1}^i (\alpha_{k-\frac12}+\alpha_{-(k-\frac{1}{2})})
, & \text{if $i>-j$.}\\
\end{cases} \end{equation} In any case,  $ {\bar{\text{wt}_d}(\mathbf i)}= {\bar{\text{wt}_d}(\mathbf j) }$.
Conversely, if $ {\bar{\text{wt}_d}(\mathbf i)}= {\bar{\text{wt}_d}(\mathbf j) }$,  then  $$\text{wt}_d(\mathbf i)-\text{wt}_d(\mathbf j)=\sum_{j\in \mathbb{I}}a_j\varepsilon_j=\sum_{i\in I,i\geq0}b_i(\alpha_i+\alpha_{-i})$$ for some $a_j\in\{1,-1,0\}$ and $b_i\in \mathbb Z$.
Since
\begin{equation}\label{key12345}\alpha_i+\alpha_{-i}=\varepsilon_{i-\frac{1}{2}}-\varepsilon_{i+\frac{1}{2}}+
\varepsilon_{-i-\frac{1}{2}}-\varepsilon_{-i+\frac{1}{2}}\end{equation}
and
 \begin{equation}\label{minues}
 \alpha_i+\alpha_{-i}+\alpha_{i+1}+\alpha_{-i-1}=\varepsilon_{i-\frac{1}{2}}-\varepsilon_{-i+\frac{1}{2}}-\varepsilon_{i+\frac{3}{2}}+\varepsilon_{-i-\frac{3}{2}}, \text{for $i\ge 0$},
 \end{equation}
 we have $a_j=-a_{-j}$ and $\sharp\{j| a_j\neq 0\}$ is even. In other words, $\mathbf j$ is obtained from $\mathbf i$ by  permuting its entries and  changing even number of signs.  Thanks to Lemma~\ref{key000},  $\mathbf j=w(\mathbf i)$ for some  $w\in W_\infty$.
\end{proof}

\begin{Prop}\label{o11} Suppose $\mathbb I= \mathbb Z$ and  $\mathbf i, \mathbf j\in \mathbb I_{d}^-$ such that there is a unique $k$ with $i_k=0$. Then  $ {\bar{\text{wt}_d}(\mathbf i)}= {\bar{\text{wt}_d}(\mathbf j) }$    if and only if   $w (\mathbf i)=\mathbf j$ for some  $w\in W_\infty$. \end{Prop}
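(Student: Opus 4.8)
The plan is to mirror the proof of Proposition~\ref{o1}, changing only the two places where the integral index $0\in\mathbb I$ and the unique zero entry of $\mathbf i$ enter.

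First I would treat the implication $w(\mathbf i)=\mathbf j\Rightarrow \bar{\text{wt}_d}(\mathbf i)=\bar{\text{wt}_d}(\mathbf j)$ exactly as before. Writing $w$ as a product of the reflections $s_{\delta_k-\delta_l}$ and $s_{\delta_k+\delta_l}$ and invoking Lemma~\ref{key000}, the transpositions $s_{\delta_k-\delta_l}$ merely permute the entries of $\mathbf i$ and so fix $\text{wt}_d$, while each $s_{\delta_k+\delta_l}$ changes $\text{wt}_d$ by a vector $\varepsilon_a+\varepsilon_b-\varepsilon_{-a}-\varepsilon_{-b}\in Q^\theta$ by \eqref{wtt2}. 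The only new feature is that now one of $a,b$ may be $0$; but \eqref{wtt2} is still valid then, giving $\varepsilon_a-\varepsilon_{-a}\in Q^\theta$. Hence $\text{wt}_d(\mathbf i)-\text{wt}_d(\mathbf j)\in Q^\theta$ and the two images in $P_\theta$ agree.

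For the converse, suppose $v:=\text{wt}_d(\mathbf i)-\text{wt}_d(\mathbf j)\in Q^\theta$ and write $v=\sum_{j\in\mathbb Z}a_j\varepsilon_j$, where $a_j\in\{-1,0,1\}$ equals $1$, $-1$ or $0$ according as $j$ is an entry of $\mathbf j$ but not $\mathbf i$, of $\mathbf i$ but not $\mathbf j$, or of both or of neither (the entries of each sequence being distinct). Since $Q^\theta=\{\theta(\mu)+\mu\mid\mu\in Q\}$ is pointwise fixed by $\theta$ and $\theta(\varepsilon_c)=-\varepsilon_{-c}$, comparing coefficients in $\theta(v)=v$ yields $a_j=-a_{-j}$ for all $j$, and in particular $a_0=0$; equivalently this follows from \eqref{key12345} and \eqref{minues} as in Proposition~\ref{o1}. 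Here $I=\tfrac12+\mathbb Z$ contains no index $0$, so $Q^\theta$ has no generator $2\alpha_0$ and $\varepsilon_0$ has coefficient $0$ in every element of $Q^\theta$; this is the reason no parity condition on the number of sign changes surfaces, in contrast with Proposition~\ref{o1}. The relations $a_j=-a_{-j}$ say that $\mathbf i$ and $\mathbf j$ have the same multiset of absolute values, while $a_0=0$ together with the hypothesis that $\mathbf i$ has a unique entry $0$ forces the same for $\mathbf j$.

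Finally I would produce $w$. Having equal multisets of absolute values, $\mathbf j$ arises from $\mathbf i$ by a permutation together with sign changes of some entries; by Lemma~\ref{key000} any permutation and any even number of sign changes lie in $W_\infty$. The remaining point is parity, and this is where the unique zero is essential: if $i_k=0$, then for a position $k'$ carrying a nonzero entry, $s_{\delta_{k'}-\delta_k}\,s_{\delta_{k'}+\delta_k}$ flips the sign of that single entry while leaving the zero fixed, by Lemma~\ref{key000}. Thus an odd number of sign changes is also realizable in $W_\infty$, so $\mathbf j=w(\mathbf i)$ for some $w\in W_\infty$. I expect this last parity bookkeeping to be the main obstacle: in type $D_\infty$ only even sign changes are directly available, and one must check that the unique zero absorbs the parity — the combinatorial counterpart of the fact that $Q^\theta$ contains no $2\alpha_0$ when $\mathbb I=\mathbb Z$.
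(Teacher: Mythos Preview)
Your proposal is correct and follows essentially the same strategy as the paper's proof: both directions mirror Proposition~\ref{o1}, with the unique zero entry absorbing the sign-change parity in the converse. The one tactical difference is that for the converse you invoke the $\theta$-invariance of $Q^\theta$ directly to get $a_j=-a_{-j}$ and $a_0=0$, whereas the paper expands $v$ in the generators $\alpha_i+\alpha_{-i}$ and splits into cases according to the parity of the coefficient $b_{1/2}$ (even parity giving an even number of sign changes, odd parity an odd number), then appeals to Lemma~\ref{key000} in each case; your route is a bit cleaner and your explicit single-sign-flip element $s_{\delta_{k'}-\delta_k}\,s_{\delta_{k'}+\delta_k}$ makes the parity absorption transparent.
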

\begin{proof}
Suppose  $w(\mathbf i)=\mathbf j$ for some $w\in W_\infty$. Thanks to Lemma~\ref{key000},  there are some $b_{i}\in \mathbb{Z}$ such that
$$\text{wt}_d(\mathbf i)-\text{wt}_d(
\mathbf j)=\sum_{i>0}b_{i}(\varepsilon_i- \varepsilon_{-i}).$$
Since $i_k=0$, $\sharp\{i\mid    b_i\neq 0\}$ may be odd.
For any $i>0$,  we have

\begin{equation}\label{wtt21} \varepsilon_{-i}-\varepsilon_i = \sum_{j=\frac12}^{ i-\frac{1}{2}}( \alpha_j+\alpha_{-j}) \in Q^\theta.  \end{equation}
So,  $ {\bar{\text{wt}_d}(\mathbf i)}= {\bar{\text{wt}_d}(\mathbf j) }$.
Conversely,  if  $ {\bar{\text{wt}_d}(\mathbf i)}= {\bar{\text{wt}_d}(\mathbf j) }$, then  $$\text{wt}_d(\mathbf i)-\text{wt}_d(\mathbf j)=\sum_{j\in \mathbb{I}}a_j\varepsilon_j=\sum_{i\in I,i>0}b_i(\alpha_i+\alpha_{-i})$$ for some $a_j\in\{1,-1,0\}$ and $b_i\in \mathbb Z$.
Obviously $\alpha_{\frac12}+\alpha_{-\frac12}=\varepsilon_{-1}-\varepsilon_1$.
If $b_{\frac12}$ is even,  by \eqref{key12345}--\eqref{minues}, $a_j=-a_{-j}$ and $\sharp\{j| a_j\neq 0\}$ is even. So,  $\mathbf j$ is obtained from $\mathbf i$ by permuting its entries and  replacing the entries $k, l$ by $-l, -k$ for some pairs $k, l\neq0$. Thanks to Lemma~\ref{key000}, $\mathbf j=w(\mathbf i)$ for some  $w\in W_\infty$.
If $b_{\frac12}$ is odd,  by \eqref{key12345}--\eqref{minues}, $a_j=-a_{-j}$ and  $\sharp\{j| a_j\neq 0\}$ is odd. Since $i_k=0$, $\mathbf j$ is obtained from $\mathbf i$ by permuting its entries and  replacing the entries $ l, m$ by $-m, -l$ for some pairs $l, m$. In this case, one of $l, m$ may be $0$. Thanks to Lemma~\ref{key000} again, we have   $\mathbf j=w(\mathbf i)$ for some  $w\in W_\infty$.
\end{proof}

When $\mathbb I= \mathbb Z$, we consider the subgroup $Q^{\theta, +}$ of $Q^\theta$ such that  \begin{equation}\label{q1} Q^{\theta, +}=2\mathbb{Z}(\alpha_{\frac{1}{2}}+\alpha_{-\frac{1}{2}})+ \sum_{i\in I\setminus\{ \frac{1}{2}, -\frac12\}}\mathbb{Z}(\alpha_i+\alpha_{-i}).\end{equation} Then $Q^\theta=Q^{\theta, +}\cup Q^{\theta, -}$ where $Q^{\theta, -}= (\alpha_{\frac{1}{2}}+\alpha_{-\frac{1}{2}})+Q^{\theta, +}$.
\begin{Prop}\label{o2} Suppose $\mathbb I=\mathbb Z$ and   $\mathbf i, \mathbf j\in \mathbb I_{d}^-$  such that   $i_k\neq 0$ for all $k\ge 1$. We have   $ {\text{wt}_d(\mathbf i)}\equiv \text{wt}_d(\mathbf j) \pmod
{Q^{\theta, +}}$ if and only if  $w (\mathbf i)=\mathbf j$ for some $w\in W_\infty$.
\end{Prop}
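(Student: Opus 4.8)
The plan is to mirror the proof of Proposition~\ref{o11}, but to track the parity obstruction that the zero entry there was used to absorb; since here every entry is nonzero, that obstruction must instead be killed by working modulo the smaller lattice $Q^{\theta,+}$ rather than all of $Q^\theta$.

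For the ``if'' direction, suppose $\mathbf j=w(\mathbf i)$ with $w\in W_\infty$. Decomposing $w$ into the generators of Lemma~\ref{key000}, the reflections $s_{\delta_j-\delta_k}$ only permute the entries and hence leave $\text{wt}_d$ unchanged, while each $s_{\delta_j+\delta_k}$ negates two entries. Because $\mathbf i$ has no zero entry and both transposition and negation preserve this, no zero is ever created, so every sign flip genuinely changes a value and the total number $r$ of negated values is even (this is exactly the defining parity condition for the type $D$ Weyl group). The weight difference is then a sum of $r$ terms of the shape $\pm(\varepsilon_{-c}-\varepsilon_c)$ with $c>0$; by \eqref{wtt21} each such term lies in $Q^{\theta,-}$, i.e.\ has odd coefficient on $\alpha_{\frac12}+\alpha_{-\frac12}$, so their sum has even such coefficient and lies in $Q^{\theta,+}$.

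For the converse, I would first use $Q^{\theta,+}\subseteq Q^\theta$ to write $\text{wt}_d(\mathbf i)-\text{wt}_d(\mathbf j)=\sum_{j\in\mathbb I}a_j\varepsilon_j=\sum_{i\in I,\,i>0}b_i(\alpha_i+\alpha_{-i})$ as in Proposition~\ref{o11}, where $a_j\in\{1,-1,0\}$ since $\mathbf i,\mathbf j$ are strictly increasing and $a_0=0$ since no entry is zero. Comparing $\varepsilon$-coefficients through \eqref{key12345}--\eqref{minues} yields $a_j=-a_{-j}$, which forces the multiset of absolute values of $\mathbf i$ and $\mathbf j$ to coincide; hence $\mathbf j$ is obtained from $\mathbf i$ by permuting entries and negating some set of $r$ nonzero values. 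The key computation is that solving the telescoping recursion $a_j=b_{j+\frac12}-b_{j-\frac12}$ (for $j\ge1$, with $b_i\to0$) gives $b_{\frac12}=-\sum_{s\ge1}a_s=r_+-r_-$, where $r_\pm$ count the positive-to-negative and negative-to-positive flips; thus the parity of $b_{\frac12}$ equals the parity of $r=r_++r_-$. The hypothesis that the difference lies in $Q^{\theta,+}$ says precisely that $b_{\frac12}$ is even, whence $r$ is even. An even number of sign changes, together with a permutation, is then realized by a product of the reflections $s_{\delta_j+\delta_k}$ and $s_{\delta_j-\delta_k}$ of Lemma~\ref{key000}, giving $\mathbf j=w(\mathbf i)$ for some $w\in W_\infty$.

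The step I expect to be the crux is the parity bookkeeping identifying $b_{\frac12}\bmod 2$ with the number of sign changes $r\bmod 2$, together with the observation that, absent a zero entry, the type $D$ Weyl group realizes exactly the even-$r$ signed permutations. This is the precise point where Proposition~\ref{o2} diverges from Proposition~\ref{o11}: there a single zero entry let $W_\infty$ also reach odd-$r$ configurations, so $Q^\theta$ sufficed, whereas here the refinement to the index-two sublattice $Q^{\theta,+}$ is exactly what encodes the missing parity.
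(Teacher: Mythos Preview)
Your argument is correct and follows essentially the same route as the paper: for the forward direction the paper writes $\text{wt}_d(\mathbf i)-\text{wt}_d(\mathbf j)$ as an integer combination of the pair-flip terms $\varepsilon_i+\varepsilon_j-\varepsilon_{-i}-\varepsilon_{-j}$ (each in $Q^{\theta,+}$ by \eqref{wtt21}), which is just your ``even number of single flips'' repackaged in pairs, and for the converse the paper writes the $\alpha_{\frac12}+\alpha_{-\frac12}$-coefficient as $2b_{\frac12}$ and invokes \eqref{key12345}--\eqref{minues} exactly where you spell out the telescoping $b_{\frac12}=-\sum_{s\ge1}a_s$. One small slip: your justification ``$a_0=0$ since no entry is zero'' presumes $\mathbf j$ also avoids $0$, but the hypothesis is only on $\mathbf i$; the correct order is that $a_0=0$ is forced by the $Q^\theta$-expansion (the $\varepsilon_0$-contributions from $i=\tfrac12$ cancel), and then $0\notin\mathbf i$ gives $0\notin\mathbf j$.
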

\begin{proof}
 If $\mathbf j=w (\mathbf i)$ for some $w\in W_\infty$, then there are some integers $b_{i,j}$'s such that

 $$\text{wt}_d(\mathbf i)-\text{wt}_d(\mathbf j)=\sum_{i,j\in\mathbb{ I}\setminus 0,i\neq -j}b_{i,j}(\varepsilon_i+\varepsilon_{j}-\varepsilon_{-i}-\varepsilon_{-j}).$$
 Thanks to  \eqref{wtt21},
 $\text{wt}_d(\mathbf i)\equiv \text{wt}_d(\mathbf j)\pmod{ Q^{\theta, +}}$. Conversely, if $\text{wt}_d(\mathbf i)\equiv \text{wt}_d(\mathbf j)\pmod{ Q^{\theta, +}}$, then
$$\text{wt}_d(\mathbf i)-\text{wt}_d(\mathbf j)=\sum_{j\in\mathbb{I}}a_j\varepsilon_j=2b_{\frac12}(\alpha_{\frac{1}{2}}+\alpha_{-\frac{1}{2}})+ \sum_{i\in I, i> \frac{1}{2}}b_i(\alpha_i+\alpha_{-i})$$ for some $a_j\in\{1,-1,0\}$ and $b_i\in \mathbb Z$.
Thanks to  \eqref{key12345}--\eqref{minues},
 $a_j=-a_{-j}$ and $\sharp\{j| a_j\neq 0\}$ is even.
By Lemma~\ref{key000},  $\mathbf j=w(\mathbf i)$ for some  $w\in W_\infty$.
\end{proof}
%The following result follows from Propositions~\ref{o1}--\ref{o2} and \eqref{wtt21} , immediately.

\begin{Theorem}\label{weyl}For any $\mathbf i\in \mathbb I_d^-$, define  $\mathbf j=(-i_k, i_1, i_2, \ldots, i_{k-1}, i_{k+1}, \ldots)$ for $k$ sufficiently large and  $\gamma=\bar{\text{wt}_d}(\mathbf i)\in P_\theta$. Then $\mathbf j\in \mathbb I_d^{-}$ and $\bar{\text{wt}_d}(\mathbf j)=\gamma$. Moreover,

\begin{enumerate}
\item  $\bar{\text{wt}_d}^{-1}(\gamma)=W_\infty\mathbf i\cap \mathbb I_d^-$ if either  $d\in \frac12+\mathbb Z$ or $d\in \mathbb Z$ and  $i_l=0$ for some $l$,
    \item  $\bar{\text{wt}_d}^{-1}(\gamma) =(W_\infty \mathbf i\cap \mathbb I_d^-) \overset{.}\cup (W_\infty \mathbf j\cap \mathbb I_d^-)$,
 if $d\in \mathbb Z$ and  $i_l\neq 0$ for all  $l$.
 \end{enumerate}
 Furthermore, $W_\infty \mathbf i\cap \mathbb I_d^-$ contains  infinite numbers of elements in  $\mathbb I_d^-$ in any case.
\end{Theorem}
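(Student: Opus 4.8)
The plan is to establish the four assertions in turn, reducing each comparison of weights to the three Propositions \ref{o1}, \ref{o11}, \ref{o2} together with the explicit $W_\infty$-action in Lemma~\ref{key000}. First I fix $k$ so large that $i_k=d+k$ and $-(d+k)<i_1$. Then $\mathbf j\in\mathbb I_d^-$ is a purely combinatorial check: its entries read $-i_k<i_1<\cdots<i_{k-1}<i_{k+1}<\cdots$ (strict increase using $i_{k-1}<i_k<i_{k+1}$), and $j_m=i_m=d+m$ for all $m\geq k+1$, so the stabilization condition of \eqref{id321} holds. Since $\mathbf j$ differs from $\mathbf i$ only by replacing the entry $i_k$ with $-i_k$, Definition~\ref{phi} gives $\text{wt}_d(\mathbf i)-\text{wt}_d(\mathbf j)=\varepsilon_{-i_k}-\varepsilon_{i_k}$; by the identity \eqref{wtt21} (valid for $\mathbb I=\mathbb Z$, which is the case in which $\mathbf j$ is actually used, namely part (2)) this difference lies in $Q^\theta$, so $\bar{\text{wt}_d}(\mathbf j)=\gamma$.

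For part (1) I first record the inclusion $W_\infty\mathbf i\cap\mathbb I_d^-\subseteq\bar{\text{wt}_d}^{-1}(\gamma)$, which holds in every case because each generator $s_{\delta_j\pm\delta_k}$ alters $\text{wt}_d$ only by an element of $Q^\theta$ (this is the forward direction recorded in \eqref{wtt2} and \eqref{wtt21}). For the reverse inclusion: if $d\in\frac12+\mathbb Z$ then all entries of $\mathbf i$ are nonzero and Proposition~\ref{o1} forces any $\mathbf i'$ with $\bar{\text{wt}_d}(\mathbf i')=\gamma$ to satisfy $\mathbf i'\in W_\infty\mathbf i$; if $d\in\mathbb Z$ with some $i_l=0$, then strict monotonicity makes this zero unique, so Proposition~\ref{o11} applies verbatim and gives the same conclusion. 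Hence $\bar{\text{wt}_d}^{-1}(\gamma)=W_\infty\mathbf i\cap\mathbb I_d^-$.

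Part (2) is where I expect the real work. Here $d\in\mathbb Z$ and all $i_l\neq0$, and I exploit the index-two splitting $Q^\theta=Q^{\theta,+}\cup Q^{\theta,-}$ of \eqref{q1}, detected by the parity of the coefficient of $\alpha_{\frac12}+\alpha_{-\frac12}$. Given $\mathbf i'\in\bar{\text{wt}_d}^{-1}(\gamma)$, the difference $\text{wt}_d(\mathbf i')-\text{wt}_d(\mathbf i)$ lies in $Q^\theta$; if it lies in $Q^{\theta,+}$, Proposition~\ref{o2} yields $\mathbf i'\in W_\infty\mathbf i$. If instead it lies in $Q^{\theta,-}$, I observe from \eqref{wtt21} that $\varepsilon_{-i_k}-\varepsilon_{i_k}=\text{wt}_d(\mathbf i)-\text{wt}_d(\mathbf j)$ itself lies in $Q^{\theta,-}$ (its $\alpha_{\frac12}+\alpha_{-\frac12}$-coefficient is $1$), whence $\text{wt}_d(\mathbf i')-\text{wt}_d(\mathbf j)\in Q^{\theta,+}$; since $\mathbf j$ also has all entries nonzero, Proposition~\ref{o2} applied to $\mathbf j$ gives $\mathbf i'\in W_\infty\mathbf j$. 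This proves $\bar{\text{wt}_d}^{-1}(\gamma)=(W_\infty\mathbf i\cap\mathbb I_d^-)\cup(W_\infty\mathbf j\cap\mathbb I_d^-)$, and the union is disjoint: were the two orbits to meet we would have $\mathbf j\in W_\infty\mathbf i$, forcing $\varepsilon_{-i_k}-\varepsilon_{i_k}\in Q^{\theta,+}$ by Proposition~\ref{o2}, contradicting its membership in $Q^{\theta,-}$.

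For the final infinitude statement I exhibit an explicit family inside the single orbit $W_\infty\mathbf i$. For each large $l$ I apply $s_{\delta_l+\delta_{l+1}}$, which by Lemma~\ref{key000}(2) replaces the stable entries $d+l,d+l+1$ by $-(d+l+1),-(d+l)$, and then reorder increasingly by a permutation in $W_\infty$; a position count shows the stabilization tail is untouched for indices $>l+1$, so the result lies in $W_\infty\mathbf i\cap\mathbb I_d^-$, and its smallest entry $-(d+l+1)$ separates distinct values of $l$, producing infinitely many elements uniformly in all cases. The combinatorics of the first and last assertions are routine; the step demanding the most care is part (2), namely tracking the $Q^{\theta,\pm}$ parity through Proposition~\ref{o2} to confirm that the fiber is precisely two orbits and that the single sign flip defining $\mathbf j$ genuinely lands in the nontrivial coset $Q^{\theta,-}$ so that the orbits stay disjoint.
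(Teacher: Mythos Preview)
Your argument is correct and follows the same route the paper takes: the paper's own proof is the one-line remark that the first assertion is \eqref{wtt21} and everything else ``follows from Propositions~\ref{o1}--\ref{o2}, immediately,'' and what you have written is precisely the unpacking of that sentence. In particular your use of the coset decomposition $Q^\theta=Q^{\theta,+}\cup Q^{\theta,-}$ to split the fibre in case~(2), and your observation that $\varepsilon_{-i_k}-\varepsilon_{i_k}\in Q^{\theta,-}$ because the $\alpha_{1/2}+\alpha_{-1/2}$ coefficient in \eqref{wtt21} is $1$, are exactly the mechanism implicit in Proposition~\ref{o2}. One small point you could make explicit: when you invoke Proposition~\ref{o2} for an arbitrary $\mathbf i'\in\bar{\text{wt}_d}^{-1}(\gamma)$ you are tacitly using that $\mathbf i'$ also has no zero entry; this is automatic, since the coefficient of $\varepsilon_0$ in every generator $\alpha_i+\alpha_{-i}$ of $Q^\theta$ (for $i\in\tfrac12+\mathbb Z$) vanishes, so $0\in\mathbf i'$ would force $0\in\mathbf i$. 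Your explicit infinite family via $s_{\delta_l+\delta_{l+1}}$ is a nice concrete witness for the final clause, which the paper does not spell out.
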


\begin{proof}The first statement follows from \eqref{wtt21}. The remaining   statements  follow from Propositions~\ref{o1}--\ref{o2}, immediately. \end{proof}
\section{Blocks of  Brauer category $\B(\delta)$ over $\mathbb C$ }
\subsection{Brauer categories and affine Brauer categories} The affine Brauer category $\AB$ has been introduced in \cite{RS3}. It is the strict $\Bbbk$-linear  monoidal category
  generated by a single object
 \begin{tikzpicture}[baseline = 10pt, scale=0.5, color=\clr]
                \draw[-,thick] (0,0.5)to[out=up,in=down](0,1.2);
    \end{tikzpicture}, where $\Bbbk$ is an arbitrary domain.  In this note, we focus on the complex number field $\mathbb C$.
 The   objects of  $\AB$ are  $\ob m$'s, where $\ob m$ represents
 $
\begin{tikzpicture}[baseline = 10pt, scale=0.5, color=\clr]
                \draw[-,thick] (0,0.5)to[out=up,in=down](0,1.2);
    \end{tikzpicture}^{\otimes m} $, $m\in \mathbb N$.
     So, the unit object in $\AB$  is  $\ob 0$, while the object
\begin{tikzpicture}[baseline = 10pt, scale=0.5, color=\clr] \draw[-,thick] (0,0.5)to[out=up,in=down](0,1.2);
    \end{tikzpicture}
    is  $\ob 1$. There are four generating morphisms    $\lcup:\ob0\rightarrow \ob2$, $ \lcap:\ob2\rightarrow\ob0$, $ \swap: \ob2\rightarrow\ob2$   and  $\xdot:\ob1\rightarrow\ob1$. For any $m>0$, the identity morphism $1_{\ob m}: \ob m\rightarrow \ob m$  is drawn as the object itself. For example,  $\begin{tikzpicture}[baseline = 10pt, scale=0.5, color=\clr]
                \draw[-,thick] (0,0.5)to[out=up,in=down](0,1.2);\draw[-,thick] (0.5,0.5)to[out=up,in=down](0.5,1.2);
    \end{tikzpicture}$ represents  $\text{1}_{\ob 2}$.
  Using string calculus in a strict monoidal category,  we list  the defining relations  as follows~\cite[Definition~1.2]{RS3}:
     \begin{equation}\label{B1}
        \begin{tikzpicture}[baseline = 10pt, scale=0.5, color=\clr]
            \draw[-,thick] (0,0) to[out=up, in=down] (1,1);
            \draw[-,thick] (1,1) to[out=up, in=down] (0,2);
            \draw[-,thick] (1,0) to[out=up, in=down] (0,1);
            \draw[-,thick] (0,1) to[out=up, in=down] (1,2);
                    \end{tikzpicture}
        ~=~
        \begin{tikzpicture}[baseline = 10pt, scale=0.5, color=\clr]
            \draw[-,thick] (0,0) to (0,1);
            \draw[-,thick] (0,1) to (0,2);
            \draw[-,thick] (1,0) to (1,1);
            \draw[-,thick] (1,1) to (1,2);
        \end{tikzpicture}~
        ,\qquad
        \begin{tikzpicture}[baseline = 10pt, scale=0.5, color=\clr]
            \draw[-,thick] (0,0) to[out=up, in=down] (2,2);
            \draw[-,thick] (2,0) to[out=up, in=down] (0,2);
            \draw[-,thick] (1,0) to[out=up, in=down] (0,1) to[out=up, in=down] (1,2);
        \end{tikzpicture}
        ~=~
        \begin{tikzpicture}[baseline = 10pt, scale=0.5, color=\clr]
            \draw[-,thick] (0,0) to[out=up, in=down] (2,2);
            \draw[-,thick] (2,0) to[out=up, in=down] (0,2);
            \draw[-,thick] (1,0) to[out=up, in=down] (2,1) to[out=up, in=down] (1,2);
        \end{tikzpicture}~,
    \end{equation}
    \begin{equation}\label{B2}
        \begin{tikzpicture}[baseline = 10pt, scale=0.5, color=\clr]
            \draw[-,thick] (2,0) to[out=up, in=down] (2,1) to[out=up, in=right] (1.5,1.5) to[out=left,in=up] (1,1);
            \draw[-,thick] (1,1) to[out=down,in=right] (0.5,0.5) to[out=left,in=down] (0,1) to[out=up,in=down] (0,2);
        \end{tikzpicture}
        ~=~
        \begin{tikzpicture}[baseline = 10pt, scale=0.5, color=\clr]
            \draw[-,thick] (0,0) to (0,1);
            \draw[-,thick] (0,1) to (0,2);
        \end{tikzpicture}
        ~=~
        \begin{tikzpicture}[baseline = 10pt, scale=0.5, color=\clr]
            \draw[-,thick] (2,2) to[out=down, in=up] (2,1) to[out=down, in=right] (1.5,0.5) to[out=left,in=down] (1,1);
            \draw[-,thick] (1,1) to[out=up,in=right] (0.5,1.5) to[out=left,in=up] (0,1) to[out=down,in=up] (0,0);
        \end{tikzpicture}~,
    \end{equation}

\begin{equation}\label{B3}
    \begin{tikzpicture}[baseline = 5pt, scale=0.5, color=\clr]
        \draw[-,thick] (0,1) to[out=down,in=left] (0.5,0.35) to[out=right,in=down] (1,1);
    \end{tikzpicture}
    ~=~
    \begin{tikzpicture}[baseline = 5pt, scale=0.5, color=\clr]
        \draw[-,thick] (0,1) to[out=down,in=up] (1,0) to[out=down,in=right] (0.5,-0.5) to[out=left,in=down] (0,0) to[out=up,in=down] (1,1);
    \end{tikzpicture}
    ~,\qquad
    \begin{tikzpicture}[baseline = 5pt, scale=0.5, color=\clr]
        \draw[-,thick] (0,0) to[out=up,in=left] (0.5,0.65) to[out=right,in=up] (1,0);
    \end{tikzpicture}
    ~=~
    \begin{tikzpicture}[baseline = 5pt, scale=0.5, color=\clr]
        \draw[-,thick] (0,0) to[out=up,in=down] (1,1) to[out=up,in=right] (0.5,1.5) to[out=left,in=up] (0,1) to[out=down,in=up] (1,0);
    \end{tikzpicture}~,
    \end{equation}

    \begin{equation}\label{B4} \begin{tikzpicture}[baseline = -1mm, color=\clr]
	\draw[-,thick ] (0.28+0.56+0.56,0) to[out=90, in=0] (0+0.56+0.56,0.3);
	\draw[-,thick ] (0+0.56+0.56,0.3) to[out = 180, in = 90] (-0.28+0.56+0.56,0);
%   \draw[-,thick ] (0.28+0.56,0) to (0.28+0.56,0.3);
	
\draw[-,thick ] (0.28,-0.45) to[out=60,in=-90] (0.28+0.56,0);
	%\draw[-,line width=4pt,white](0.28+0.56,-0.45) to[out=120,in=-90] (0.28,0);
	\draw[-,thick ](0.28+0.56,-0.45) to[out=120,in=-90] (0.28,0);
 \draw[-,thick ] (0.28+0.56+0.56,-0.45) to (0.28+0.56+0.56,0);\end{tikzpicture}
	=\begin{tikzpicture}[baseline = -1mm, color=\clr]
	\draw[-,thick ] (0.28+0.56+0.56,0) to[out=90, in=0] (0+0.56+0.56,0.3);
	\draw[-,thick ] (0+0.56+0.56,0.3) to[out = 180, in = 90] (-0.28+0.56+0.56,0);
%   \draw[-,thick ] (0.28+0.56,0) to (0.28+0.56,0.3);
	
\draw[-,thick ] (0.28+0.56+0.56+0.56,-0.45) to[out=120,in=-90] (0.28+0.56+0.56,0);
	%\draw[-,line width=4pt,white](0.28+0.56+0.56,-0.45) to[out=60,in=-90] (0.28+0.56+0.56+0.56,0);
	\draw[-,thick ](0.28+0.56+0.56,-0.45) to[out=60,in=-90] (0.28+0.56+0.56+0.56,0);
    \draw[-,thick ] (0.28+0.56,-0.45) to (0.28+0.56,-0);
   % \node at(0.28+0.56,-.45) {$\color{darkblue}\scriptstyle\circ$};
\end{tikzpicture}, \quad\begin{tikzpicture}[baseline = -9mm, color=\clr]
	
%   \draw[-,thick ] (0.28+0.56,0) to (0.28+0.56,0.3);
	
  \draw[-,thick ] (0.28+0.56,-0.9) to[out=120,in=-90] (0.28,-.45);
	%\draw[-,line width=4pt,white] (0.28,-0.9) to[out=60,in=-90] (0.28+0.56,-.45);
	\draw[-,thick ] (0.28,-0.9) to[out=60,in=-90] (0.28+0.56,-.45);
    \draw[-,thick ] (-0.28+0.56+0.56,-0.9) to[out=down,in=left] (0+0.56+0.56,-1.2) to[out=right,in=down] (0.28+0.56+0.56,-0.9);%(cup)
    \draw[-,thick ] (-0.28+0.56+0.56+0.56,-0.45) to (-0.28+0.56+0.56+0.56,-0.9);
\end{tikzpicture}= \begin{tikzpicture}[baseline = -9mm, color=\clr]
	
%   \draw[-,thick ] (0.28+0.56,0) to (0.28+0.56,0.3);
	
  \draw[-,thick ] (0.28+0.56+0.56,-0.9) to[out=60,in=-90] (0.28+0.56+0.56+0.56,-.45);
	%\draw[-,line width=4pt,white] (0.28+0.56+0.56+0.56,-0.9) to[out=120,in=-90] (0.28+0.56+0.56,-.45);
	\draw[-,thick ] (0.28+0.56+0.56+0.56,-0.9) to[out=120,in=-90] (0.28+0.56+0.56,-.45);
    \draw[-,thick ] (-0.28+0.56+0.56,-0.9) to[out=down,in=left] (0+0.56+0.56,-1.2) to[out=right,in=down] (0.28+0.56+0.56,-0.9);%(cup)
    \draw[-,thick ] (-0.28+0.56+0.56,-0.45) to (-0.28+0.56+0.56,-0.9);
\end{tikzpicture},
\end{equation}

   \begin{equation}\label{AB1}
                \begin{tikzpicture}[baseline = 7.5pt, scale=0.5, color=\clr]
            \draw[-,thick] (0,0) to[out=up, in=down] (1,2);
            \draw[-,thick] (0,2) to[out=up, in=down] (0,2.2);
            \draw[-,thick] (1,0) to[out=up, in=down] (0,2);
            \draw[-,thick] (1,2) to[out=up, in=down] (1,2.2);
             \draw(0,1.9)\bdot;
        \end{tikzpicture}
        ~-~
        \begin{tikzpicture}[baseline = 7.5pt, scale=0.5, color=\clr]
            \draw[-,thick] (0,0) to[out=up, in=down] (1,2);\draw[-,thick] (0,0) to[out=up, in=down] (0,-0.2);
             \draw[-,thick] (1,0) to[out=up, in=down] (0,2);\draw[-,thick] (1,0) to[out=up, in=down] (1,-0.2);
                        \draw(1,0.1)\bdot;
        \end{tikzpicture}
        ~=~
       \begin{tikzpicture}[baseline = 10pt, scale=0.5, color=\clr]
          \draw[-,thick] (2,2) to[out=down,in=right] (1.5,1.5) to[out=left,in=down] (1,2);
            \draw[-,thick] (2,0) to[out=up, in=right] (1.5,0.5) to[out=left,in=up] (1,0);
        \end{tikzpicture}
        ~-~
        \begin{tikzpicture}[baseline = 7.5pt, scale=0.5, color=\clr]
            \draw[-,thick] (0,0) to[out=up, in=down] (0,2);
            \draw[-,thick] (1,0) to[out=up, in=down] (1,2);
                   \end{tikzpicture}~,
    \end{equation}

  \begin{equation}\label{AB2}
               \begin{tikzpicture}[baseline = 10pt, scale=0.5, color=\clr]
            \draw[-,thick] (2,0) to[out=up, in=down] (2,1) to[out=up, in=right] (1.5,1.5) to[out=left,in=up] (1,1);
            \draw[-,thick] (1,1) to[out=down,in=right] (0.5,0.5) to[out=left,in=down] (0,1) to[out=up,in=down] (0,2);
           % \draw[fill=\clr,thick] (0.85,1)--(1,0.85)--(1.15,1)--(0.85,1);
           \draw( 1,1) \bdot;
        \end{tikzpicture}
        ~=~
        -\begin{tikzpicture}[baseline = 10pt, scale=0.5, color=\clr]
            \draw[-,thick] (0,0) to (0,2);
            \draw(0,1) \bdot;
            %\draw[fill=\clr,thick] (-0.15,0.97)--(0,1.2)--(0.15,1)--(-0.1,0.97);
                    \end{tikzpicture}
        ~=~
        \begin{tikzpicture}[baseline = 10pt, scale=0.5, color=\clr]
            \draw[-,thick] (2,2) to[out=down, in=up] (2,1) to[out=down, in=right] (1.5,0.5) to[out=left,in=down] (1,1);
            \draw[-,thick] (1,1) to[out=up,in=right] (0.5,1.5) to[out=left,in=up] (0,1) to[out=down,in=up] (0,0);
            %\draw[fill=\clr,thick] (0.85,1)--(1,0.85)--(1.15,1)--(0.85,1);
            \draw( 1,1) \bdot;
        \end{tikzpicture}~.
    \end{equation}
%Each endpoint  at both rows of the above diagrams represents  $\ob 1$. If there is no endpoint at a row, then the object at this row is  $\ob 0$.

    In \cite[(1.21)]{RS3}, we have proved that the subcategory of $\AB$ whose objects  are $\mathbb N$ and whose morphisms are  generated by three generating  morphisms $\lcap,\lcup$ and $\swap$ is isomorphic to
     the Brauer category $\B$ over $\mathbb C$ in \cite[Definition~1.1]{RS3}. So, $\B$ can be identified with  a subcategory of $\AB$.
  For any $k\in\mathbb N$, define % $\Delta_k$ the crossing-free bubble with $k$ $\bullet$'s on the left boundary. Then
 \begin{equation}\label{defofdelta}
\Delta_k= \begin{tikzpicture}[baseline = 5pt, scale=0.5, color=\clr]
        \draw[-,thick] (0.6,1) to (0.5,1) to[out=left,in=up] (0,0.5)
                        to[out=down,in=left] (0.5,0)
                        to[out=right,in=down] (1,0.5)
                        to[out=up,in=right] (0.5,1);
        \draw (0,0.5) \bdot;
        \draw (-0.4,0.5) node{\footnotesize{$k$}};
    \end{tikzpicture}= \lcap\circ (\xdot~ \begin{tikzpicture}[baseline = 5pt, scale=0.5, color=\clr]
     \draw[-,thick] (0,0.15) to (0,1.15); \end{tikzpicture} ~)^k\circ \lcup\in \End_{\AB}(\ob 0).
\end{equation}
  Thanks to~\cite[Theorem~B]{RS3}, $\AB$  can be viewed as a $\mathbb C[\Delta_0, \Delta_2, \Delta_4, \ldots]$-linear category such that $$\Delta_kg:=\Delta_k\otimes g\text{  for any $g\in\Hom_{\AB}(\ob m,\ob s)$.}$$
  Suppose   $ \gamma=(\gamma_i)_{i\in \mathbb N}\in \mathbb C^\infty$ such that $\gamma$  is  admissible in the sense of \cite[Definition~2.10]{AMR}, i.e.,
$\gamma_j$'s satisfy the following conditions~\cite[(1.25)]{RS3}:
\begin{equation}\label{admomega}
2\gamma_k=-\gamma_{k-1}+\sum_{j=1}^{k}(-1)^{j-1}\gamma_{j-1}\gamma_{k-j}, \text{ for } k=1,3,\ldots
    \end{equation}
Specializing $\Delta_j$'s  at scalars $\gamma_j$'s, we have the (specialized) affine Brauer category \cite[(1.28)]{RS3}
 $$\AB(\gamma)=\mathbb C\otimes _{\mathbb C[\Delta_0, \Delta_2,\Delta_4,\ldots]}\AB$$ where $\mathbb C$ is the  $\mathbb C[\Delta_0,\Delta_2, \Delta_4,\ldots]$-module on which $\Delta_{2j}$ acts  as $\gamma_{2j}$ for any  $j\geq 0$.
  Let $\B(\gamma_0)$ be the subcategory of   $\AB(\gamma)$ whose objects are $\mathbb N$ and whose morphisms are  generated by $\lcup, \lcap$ and $\swap$. Then
 $$\B(\gamma_0)\cong \mathbb C\otimes_{\mathbb C[\Delta_0]} \B,$$ where $\mathbb C$ is considered as  $\mathbb C[\Delta_0]$-module on which  $\Delta_0$ acts as $\gamma_0$. Note that $\B(\gamma_0)$ is the Brauer category considered in \cite{LZ}.
Later on, we assume $\gamma_0=\delta$, where $\delta\in \mathbb C$. Let $A$ be the locally unital algebra associated to $\B(\delta)$. Then
 $$A=\bigoplus_{\ob a, \ob b\in \mathbb N} \text{Hom}_{\B(\delta)} (\ob a, \ob b).$$

It is known that to study representations of $\B(\delta)$   is equivalent to study the representations of the locally unital algebra $A$.
Later on, we identify $A$ with $\B(\delta)$ by abusing of notation.
We are going to follow the notations  from \cite{BS}. Let $\B(\delta)$-lfdmod be the category  of locally finite dimensional left $\B(\delta)$-modules. For each $\lambda\in \Lambda$ in \eqref{lambda},  we construct  a standard module $\Delta(\lambda)$ and a costandard module $\nabla(\lambda)$ in \cite[(2.18)]{RS2}. For the definition of a triangular category, see \cite[Definition~5.31]{BS} or \cite{SS}.
 %The following results are special cases of so in \cite{RS2}.

 %It is proved in \cite{RS2} that $A\text{-lfdmod}$ is an upper finite fully stratified category in the sense of \cite{BS} if $\B(\delta)$ is defined over an arbitrary field. Further, when $\Bbbk=\mathbb C$, $A$-{lfdmod}  is an upper finite highest weight category. In this setting, we have standard module for all $\lambda\in \Lambda:=\bigcup_{n=0}^\infty \Lambda_n$, where $\Lambda_n$ is the set of partitions of $n$. The following result has been given in \cite{RS2}.
\begin{Theorem} \label{hd1}Let $\B(\delta)$ be the Brauer category over $\mathbb C$.
 \begin{itemize}
 \item[(1)] Each $ \Delta(\lambda)$ has the simple head $L(\lambda)$  for all $\lambda\in\Lambda$, and    $\{L(\lambda)\mid \lambda\in\Lambda\}$ is a complete set of pair-wise  non-isomorphic simple $\B(\delta)$-modules.
\item [(2)] $\B(\delta)$ is semi-simple if and only if $\delta\not\in \mathbb Z$.
\item [(3)] $\B(\delta)$-lfdmod is an upper finite highest weight category in the sense of \cite[Definition~3.42]{BS}.
 \end{itemize}
\end{Theorem}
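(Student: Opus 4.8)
The plan is to obtain all three assertions from the upper finite fully stratified structure on $\B(\delta)$-lfdmod established in \cite[Theorem~4.13]{RS2}, exploiting that the ground field is now $\mathbb{C}$. The decisive point is that the stratification of $\Lambda$ is by the size $m=|\lambda|$, and the associated stratum algebras are (Morita equivalent to) the group algebras $\mathbb{C}S_m$ of the symmetric groups. Over $\mathbb{C}$ these are semisimple by Maschke's theorem; it is precisely this semisimplicity, which fails in positive characteristic and forced \cite{RS2} to settle for the merely fully stratified conclusion, that will let me refine the structure to a genuine highest weight category.

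For part (3) I would recall the standard and proper standard objects $\Delta(\lambda)$ and $\overline{\Delta}(\lambda)$ from \cite[(2.18)]{RS2}. In a fully stratified category the canonical surjection $\Delta(\lambda)\twoheadrightarrow\overline{\Delta}(\lambda)$ has kernel filtered by proper standards from the same stratum, with multiplicities read off from the projective cover of the corresponding simple over the stratum algebra $\mathbb{C}S_m$. Since $\mathbb{C}S_m$ is semisimple, projective indecomposables coincide with simples there, so the kernel vanishes and $\Delta(\lambda)=\overline{\Delta}(\lambda)$ for every $\lambda\in\Lambda$. A fully stratified category in which standard and proper standard objects agree for all weights (equivalently $\End_{\B(\delta)}(\Delta(\lambda))=\mathbb{C}$) is exactly an upper finite highest weight category in the sense of \cite[Definition~3.42]{BS}, which gives (3). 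Part (1) is then formal: $\overline{\Delta}(\lambda)$ already has irreducible head $L(\lambda)$ with $\{L(\lambda)\mid\lambda\in\Lambda\}$ a complete irredundant list of irreducibles by the stratified theory of \cite{BS}, and the equality $\Delta(\lambda)=\overline{\Delta}(\lambda)$ transports the simple head to $\Delta(\lambda)$.

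For part (2) the two directions draw on different facts. If $\delta\notin\mathbb{Z}$, then by \cite[Corollary~3.15]{RS2} the category $\B(\delta)$-lfdmod is Morita equivalent to $\bigoplus_{d\geq0}\mathbb{C}S_d$-mod, which is semisimple, so $\B(\delta)$ is semisimple. For the converse I would argue contrapositively: when $\delta\in\mathbb{Z}$ the corner algebra $e_n A e_n=\End_{\B(\delta)}(\ob n)$ is the Brauer algebra $B_n(\delta)$, and this is non-semisimple over $\mathbb{C}$ for a suitable $n$ (already $B_2(0)$ has the nilpotent two-sided ideal spanned by the cap-cup morphism $e$ with $e^2=0$, and for every integer $\delta$ the classical semisimplicity criterion for Brauer algebras fails once $n$ is large enough). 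Because any corner $e_n A e_n$ of a semisimple locally unital algebra is itself semisimple, the non-semisimplicity of one $B_n(\delta)$ prevents $\B(\delta)$ from being semisimple.

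The step I expect to be the main obstacle is the identification underlying part (3): matching the stratified data of \cite{RS2} to the Brundan--Stroppel framework so as to see explicitly that the stratum algebras are the $\mathbb{C}S_m$ and that their semisimplicity is exactly what collapses $\Delta(\lambda)$ onto $\overline{\Delta}(\lambda)$. Once this identification is in place, (1) and (3) are purely formal consequences of the highest weight axioms, and (2) requires only the Morita equivalence together with the classical non-semisimplicity of Brauer algebras at integral parameters.
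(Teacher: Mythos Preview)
Your proposal is correct and follows essentially the same route as the paper. The paper simply cites \cite[Theorem~2.8]{RS2} for (1), \cite[Theorem~3.9]{RS2} for (2), and for (3) invokes a result from \cite{BS} stating that $A$-lfdmod is upper finite highest weight whenever $A$ comes from a triangular category with semisimple Cartan part; your argument that the stratum algebras are the $\mathbb{C}S_m$'s and that their semisimplicity collapses $\Delta(\lambda)$ onto $\overline{\Delta}(\lambda)$ is precisely the content of that citation, spelled out. The only cosmetic difference is that you derive (1) from the stratified machinery and reconstruct (2) from the Morita equivalence plus the classical non-semisimplicity of $B_n(\delta)$ at integer $\delta$, whereas the paper cites the ready-made statements in \cite{RS2}; the underlying mathematics is identical.
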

\begin{proof} In fact, (1) follows from   \cite[Theorem~2.8]{RS2} and (2)
follows from   \cite[Theorem~3.9]{RS2}. Finally,    (3) is a  special case of \cite[Lemma~3.5]{BS} which says that
 the  $A$-lfdmod is an upper finite highest weight category if $A$ is the locally unital algebra associated to a triangular category such that its ``Cartan'' part is  semi-simple.\end{proof}

Two simple $\B(\delta)$-modules $L(\lambda)$ and $L(\mu)$ are said to be  in the same block if there is a sequence $\lambda^{(1)}=\lambda, \lambda^{(2)}, \ldots, \lambda^{(k)}=\mu$ in $\Lambda$  such that there is a
nontrivial extension between  $L(\lambda^{(i)})$ and $L(\lambda^{(i+1)})$ for $i=1,2,\ldots,k-1$.
In the current case (i.e. Theorem~\ref{hd1}(3)), it is equivalent to saying that there is a sequence $\lambda^{(1)}=\lambda, \lambda^{(2)}, \ldots, \lambda^{(k)}=\mu$ in $\Lambda$  such that either $L(\lambda^{(i)})$ is a composition factor of $\Delta(\lambda^{(i+1)})$ or  $L(\lambda^{(i+1)})$ is a composition factor of   $\Delta(\lambda^{(i)})$ for all $1\le i\le k-1$.
Thanks to Theorem~\ref{hd1}(2), we always assume $\delta\in \mathbb Z$. Otherwise, $\B(\delta)$ is semi-simple and each block contains a single simple $\B(\delta)$-module.

Let  $\B(\delta)$-lfdmod$^\Delta$ be the category of left $\B(\delta)$-modules on which each object admits a  finite filtration with sub-quotients isomorphic to
  standard modules.  Define $$ [K_0(\B(\delta)\text{-lfdmod}^\Delta)]=\mathbb C\otimes_{\mathbb Z}K_0(\B(\delta)\text{-lfdmod}^\Delta) ,$$ where  $K_0(\B(\delta)\text{-lfdmod}^\Delta)$ is  the  Grothendieck group of $\B(\delta)$-lfdmod$^\Delta$.

For any   $\lambda\in \Lambda$,  let   $\lambda^t$ be  the transpose partition  of $\lambda$ such that $\lambda_i^t=|\{j\mid \lambda_j\geq i\}|$. For example, $\lambda^t=(3, 1)$ if $\lambda= (2,1,1)$.
Thanks to \cite[Proposition~10.6]{BW} there is an $\mathfrak{sl}_\infty$-isomorphism
$${\bigwedge}_{\frac\delta2-1}^{\infty}\mathbb W\cong V(\omega_{\frac{\delta-1}2}).$$

In \cite{RS2}, we obtain  a $\mathfrak g_\delta$-action  on $[K_0(\B(\delta) \text{-\rm lfdmod}^{\Delta})]$ such that  the action of the generators $e_i+f_{-i}$'s
are given by $\tilde E_i$, where  $\tilde E_i$ is an endofunctor of $\B(\delta) \text{-\rm lfdmod}$ defined via  $\xdot$ (see \cite[Lemma~3.9]{RS2}). It is well-known that there is an  $\mathfrak{sl}_\infty$-action on the representation category of symmetric groups such that the generators $e_i$'s and $f_i$'s are given by certain $i$-restriction functors and $i$-induction functors $E_i$'s and $F_i$'s, respectively.
 Using the relations among $\tilde E_i, E_i$ and $F_{-i}$  in \cite[Lemma~3.9(2)]{RS2} yields  the $\mathfrak g_\delta$-action  on $[K_0(\B(\delta) \text{-\rm lfdmod}^{\Delta})]$.  Furthermore, we have the following result.
\begin{Theorem}\label{cateofg}\cite[Theorem~5.1]{RS2} Suppose $\delta\in \mathbb Z$. Let $\mathfrak g_\delta$ be the subalgebra of $\mathfrak{sl}_\infty$ in subsection~\ref{SP}.
 As $\mathfrak g_\delta$-modules,
$[K_0(\B(\delta) \text{-\rm lfdmod}^{\Delta})]\cong  \bigwedge_d^{\infty}\mathbb W$, where $d=\frac{\delta}2-1$.  The required isomorphism   sends  $[\Delta(\lambda^t)]$ to  $w_{\mathbf i_{d,\lambda}}$ for any $\lambda\in \Lambda$.
%where $\lambda^t$ is the dual partition of $\lambda$.
\end{Theorem}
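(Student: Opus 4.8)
The plan is to build the map on the distinguished bases and then check that it intertwines a generating set of $\mathfrak g_\delta$. First I would record that $\{[\Delta(\nu)]\mid \nu\in\Lambda\}$ is a $\mathbb C$-basis of $[K_0(\B(\delta)\text{-lfdmod}^\Delta)]$: for an upper finite highest weight category (Theorem~\ref{hd1}(3)) the standard objects give a basis of the Grothendieck group of the $\Delta$-filtered subcategory. Since transposition $\nu\mapsto\nu^t$ permutes $\Lambda$, the family $\{[\Delta(\lambda^t)]\mid\lambda\in\Lambda\}$ is again a basis. On the target, $\{w_{\mathbf i}\mid \mathbf i\in\mathbb I_d^-\}$ is a basis of $\bigwedge_d^\infty\mathbb W$ and $\lambda\mapsto\mathbf i_{d,\lambda}$ is the bijection $\Lambda\to\mathbb I_d^-$ recorded after \eqref{ilambda}. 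Hence the assignment $\Phi\colon[\Delta(\lambda^t)]\mapsto w_{\mathbf i_{d,\lambda}}$ extends to a well-defined linear isomorphism, and only $\mathfrak g_\delta$-equivariance remains.

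Since $\mathfrak g_\delta$ is generated by $\{e_i+f_{-i}\mid i\in I\}$ and this acts on the left-hand side through the endofunctor $\tilde E_i$, it suffices to prove
$$\Phi\big([\tilde E_i\,\Delta(\lambda^t)]\big)=(e_i+f_{-i})\,w_{\mathbf i_{d,\lambda}}\qquad(i\in I,\ \lambda\in\Lambda).$$
I would in fact prove the stronger statement that $\Phi$ intertwines the full $\mathfrak{sl}_\infty$-action that the $i$-restriction and $i$-induction functors $E_i,F_i$ of the symmetric groups induce on $[K_0(\B(\delta)\text{-lfdmod}^\Delta)]$ through the vector-space identification $[\Delta(\nu)]\leftrightarrow[S^\nu]$ with the Specht module $S^\nu$, and then restrict to $\mathfrak g_\delta$. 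The relation $\tilde E_i=E_i+F_{-i}$ on the Grothendieck group recorded in \cite[Lemma~3.9(2)]{RS2} identifies the generator $e_i+f_{-i}$ with $E_i+F_{-i}$, so the restriction is immediate once the full $\mathfrak{sl}_\infty$-equivariance is in hand.

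The heart is therefore the identification of the symmetric-group Fock space $\bigoplus_{d}\mathbb C\otimes K_0(S_d\text{-mod})$ with $\bigwedge_d^\infty\mathbb W$ via $[\Delta(\lambda^t)]\mapsto w_{\mathbf i_{d,\lambda}}$. From the defining action of $e_i,f_i$ on $\mathbb W$, on a basis vector $w_{\mathbf i_{d,\lambda}}$ the operator $e_i$ raises the unique entry equal to $i-\frac12$ to $i+\frac12$, while $f_i$ lowers the unique entry equal to $i+\frac12$ to $i-\frac12$; each move produces a single basis vector $w_{\mathbf i_{d,\mu}}$ with coefficient $+1$ when the result stays strictly increasing and $0$ by the wedge relation otherwise, so no reordering signs arise. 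Since the $k$th entry of $\mathbf i_{d,\lambda}$ is $d-\lambda_k+k$, raising it by one corresponds to removing the box $(k,\lambda_k)$ of $\lambda$ and lowering it by one to adding the box $(k,\lambda_k+1)$, with the exclusion/non-ordering failure being exactly non-removability or non-addability. Under transposition these become adding or removing a box of $\lambda^t$, and a direct content computation (using $d=\frac\delta2-1$) shows that the wedge index $i$ of each move matches, under $j\mapsto\frac{\delta-1}2+j$, the content-residue $j$ of the box of $\lambda^t$ that governs $E_i,F_i$; thus $e_i\leftrightarrow E_i$ and $f_i\leftrightarrow F_i$ term by term on every addable and removable box. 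This is the classical realization of level-one Fock space as the semi-infinite wedge, labelled to match the present conventions.

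The main obstacle is the bookkeeping in this last step: keeping the half-integer index set $\mathbb I=\frac12+I$, the shift $d=\frac\delta2-1$, the transpose $\lambda\leftrightarrow\lambda^t$, and the wedge sign conventions mutually consistent, so that the raising move lands on the $e$-side and the lowering move on the $f$-side with the correct index. In particular one must see that the symmetric-pair flip $i\mapsto-i$ in $\tilde E_i=E_i+F_{-i}$ is reproduced by the fact that adding and removing boxes of $\lambda$ carry opposite-signed residues. Everything else—the basis statement, exactness of $\tilde E_i$ on $\Delta$-filtered modules, and the passage from the $\mathfrak{sl}_\infty$-isomorphism to the $\mathfrak g_\delta$-isomorphism—is formal once this combinatorial match is established.
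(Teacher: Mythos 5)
Your proposal is correct and takes essentially the same route as the paper: here the theorem is imported by citation from \cite[Theorem~5.1]{RS2}, and the discussion preceding it sketches exactly your argument --- the $\mathfrak g_\delta$-action defined through the endofunctors $\tilde E_i$ with the relation $\tilde E_i=E_i+F_{-i}$ on the Grothendieck group from \cite[Lemma~3.9(2)]{RS2}, combined with the identification of the symmetric-group Fock space with $\bigwedge_d^{\infty}\mathbb W$ via $v_{\lambda,d}\mapsto w_{\mathbf i_{d,\lambda^t}}$ from \cite[Proposition~10.6]{BW}, which is the same wedge/box bookkeeping you carry out and which the paper reuses in Lemma~\ref{key1}. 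Your combinatorial core checks out against the paper's conventions: with the $k$th entry of $\mathbf i_{d,\lambda}$ equal to $d-\lambda_k+k$, raising it by one corresponds to removing the box of $\lambda^t$ of shifted content $i=\frac{\delta-1}{2}+(k-\lambda_k)$ as in \eqref{con}, collisions occur only with adjacent entries (so no reordering signs), and the restriction from the $\mathfrak{sl}_\infty$-statement to the $\mathfrak g_\delta$-generators $e_i+f_{-i}$ is formal, just as you say.
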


%\subsection{$W_\infty$-orbits}

For any $\lambda\in\Lambda$, we identify $\lambda$ with its associated  Young diagram. It is the diagram on which there are $\lambda_i$ boxes in the $i$th row. For any box $x$ in row $i$ and column $j$ of the Young diagram of $\lambda$, let \begin{equation}\label{con}c_{\delta}(x)=\frac{\delta-1}{2}+c(x)\in I,\end{equation} where $I$ is given in \eqref{io} and  $c(x)= j-i$,  the usual  content of $x$.  Recall simple roots $\alpha_{i}$'s and fundamental weights $\omega_i$'s in subsection~\ref{lie}.
In \cite[Definition~4.5]{RS2}, we define the weight function $\text{wt}: \Lambda \rightarrow P$ such that  \begin{equation}\label{wt123}
\text{wt}(\lambda)=\omega_{\frac{\delta-1}{2}}-\sum_{x\in \lambda}\alpha_{c_{\delta}(x)}\in P. \end{equation}
It induces the weight function $\bar{\text{wt}}: \Lambda\rightarrow P_\theta $.

% One can check the following result, directly.
\begin{Lemma}\label{key1}Suppose $d=\frac{\delta}2-1$.  For any $\lambda\in \Lambda$, $\text{wt}(\lambda)=\text{wt}_d(\mathbf i_{d, \lambda^t})$,   where $\mathbf i_{d, \lambda}$  is given in \eqref{ilambda} and $\text{wt}_d$ is given in Definition~\ref{phi} .
\end{Lemma}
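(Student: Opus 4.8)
The plan is to express both sides as finite modifications of the weight attached to the empty partition and to match them column-by-column. By \eqref{wt123} we have $\text{wt}(\lambda)-\text{wt}(\emptyset)=-\sum_{x\in\lambda}\alpha_{c_\delta(x)}$, and since $\lambda^t_k=|\{j\mid\lambda_j\geq k\}|=0$ once $k>\lambda_1$, the sequence $\mathbf i_{d,\lambda^t}$ agrees with $\mathbf i_{d,\emptyset}=(d+1,d+2,\ldots)$ in all but finitely many entries; hence $\text{wt}_d(\mathbf i_{d,\lambda^t})-\text{wt}_d(\mathbf i_{d,\emptyset})=-\sum_{k\geq1}(\varepsilon_{d-\lambda^t_k+k}-\varepsilon_{d+k})$ is likewise a finite sum. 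So it suffices to establish the combinatorial identity $\sum_{x\in\lambda}\alpha_{c_\delta(x)}=\sum_{k\geq1}(\varepsilon_{d-\lambda^t_k+k}-\varepsilon_{d+k})$ together with the identification of base points $\text{wt}(\emptyset)=\text{wt}_d(\mathbf i_{d,\emptyset})=\omega_{\frac{\delta-1}2}$.

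For the combinatorial identity I would first rewrite each simple root in the $\varepsilon$-basis. Recalling $d=\frac\delta2-1=\frac{\delta-1}2-\frac12$ and $\alpha_i=\varepsilon_{i-\frac12}-\varepsilon_{i+\frac12}$, a box $x$ in row $i$ and column $k$ satisfies $c_\delta(x)=\frac{\delta-1}2+(k-i)$, whence $\alpha_{c_\delta(x)}=\varepsilon_{d+k-i}-\varepsilon_{d+k-i+1}$. I would then group $\sum_{x\in\lambda}\alpha_{c_\delta(x)}$ by columns: column $k$ consists of the boxes in rows $i=1,\ldots,\lambda^t_k$, so its contribution is the telescoping sum $\sum_{i=1}^{\lambda^t_k}(\varepsilon_{d+k-i}-\varepsilon_{d+k-i+1})=\varepsilon_{d+k-\lambda^t_k}-\varepsilon_{d+k}$. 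Summing over $k\geq1$ (all but finitely many terms vanishing) gives exactly $\sum_{k\geq1}(\varepsilon_{d-\lambda^t_k+k}-\varepsilon_{d+k})$, as desired; note $d-\lambda^t_k+k$ is precisely the $k$th entry of $\mathbf i_{d,\lambda^t}$ from \eqref{ilambda}.

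It remains to pin down the base point. One route is to note that $\text{wt}(\emptyset)=\omega_{\frac{\delta-1}2}$ is immediate from \eqref{wt123}, while $w_{\mathbf i_{d,\emptyset}}$ is the highest weight vector of $\bigwedge_d^\infty\mathbb W\cong V(\omega_{\frac{\delta-1}2})$, so by the remark following Definition~\ref{phi} its weight $\text{wt}_d(\mathbf i_{d,\emptyset})$ is also $\omega_{\frac{\delta-1}2}$. Alternatively one verifies this directly using $\varepsilon_{d+k}=\omega_{\frac{\delta-1}2+k}-\omega_{\frac{\delta-1}2+k-1}$ and telescoping $-\sum_{k=1}^{N}\varepsilon_{d+k}=\omega_{\frac{\delta-1}2}-\omega_{\frac{\delta-1}2+N}$. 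The only delicate point in the whole argument is this last step: the defining expression for $\text{wt}_d$ is an infinite sum, so the equality must be read in the completed weight space, equivalently after discarding the term at infinity. I expect that bookkeeping, rather than any genuine difficulty, to be the one place needing care; everything else reduces to the finite column-telescoping above.
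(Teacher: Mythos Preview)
Your argument is correct. The column-wise telescoping is clean, and the identification $c_\delta(x)\mp\tfrac12=d+k-i$ and $d+k-i+1$ for a box in row $i$, column $k$ is exactly what is needed. Your only hesitation, the interpretation of the infinite sum $-\sum_{k\ge1}\varepsilon_{d+k}$, is harmless: by the sentence following Definition~\ref{phi}, $\text{wt}_d(\mathbf i)$ is \emph{defined} to be the $\mathfrak{sl}_\infty$-weight of $w_{\mathbf i}$, so it is an honest element of $P$ and differences of such weights are finite sums. Your first option for the base point (using $\bigwedge_d^\infty\mathbb W\cong V(\omega_{\frac{\delta-1}2})$) is therefore rigorous with no ``term at infinity'' to discard.

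The paper takes a slightly different route: it invokes the $\mathfrak{sl}_\infty$-isomorphism $\bigwedge_d^\infty\mathbb V\cong\bigwedge_d^\infty\mathbb W$ of \cite[Proposition~10.6]{BW}, which sends $v_{\lambda,d}=v_{\lambda_1+d}\wedge v_{\lambda_2+d-1}\wedge\cdots$ to $w_{\mathbf i_{d,\lambda^t}}$, and then asserts (without detail) that the weight of $v_{\lambda,d}$ equals $\text{wt}(\lambda)$. That hidden check is the row-wise analogue of your column-wise telescoping. So the two proofs are close in spirit; yours is more self-contained and makes the combinatorics explicit, while the paper's version packages the passage from $\lambda$ to $\lambda^t$ into the cited isomorphism and leaves the residual telescoping to the reader.
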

\begin{proof} Recall that  $\bigwedge_d^{\infty}\mathbb V$    has basis   $\{v_{\lambda,d}\mid \lambda\in \Lambda\}$ \cite{BW}, where $$v_{\lambda,d}=v_{\lambda_1+d}\wedge v_{\lambda_2+d-1}\wedge v_{\lambda_3+d-2}\wedge\ldots.$$
Thanks to \cite[Proposition~10.6]{BW}, \begin{equation}\label{isso} {\bigwedge}_d^{\infty}\mathbb V\cong {\bigwedge}_d^{\infty}\mathbb W\end{equation} as $\mathfrak {sl}_\infty$-modules and the required isomorphism sends
$v_{\lambda,d} $ to  $w_{\mathbf i_{d, \lambda^t}}$, where $w_{\mathbf i_{d, \lambda^t}}$ is given in \eqref{wbi} and \eqref{ilambda} for any $\lambda\in \Lambda$.
Since we are assuming  $d=\frac{\delta}2-1$, one can check that the $\mathfrak {sl}_\infty$-weight of   $v_{\lambda,d} $ is $\text{wt}(\lambda)$.
  Thanks to \eqref{isso}, $\text{wt}(\lambda)=\text{wt}_d(w_{\mathbf i_{d, \lambda^t}})=\text{wt}_d(\mathbf i_{d, \lambda^t})$, proving the result.
\end{proof}

In \cite[Corollary~4.7]{RS2}, we have  proved that  $\text{wt}(\lambda)\equiv \text{wt}(\mu)\pmod{Q^\theta}$ if
$L(\lambda)$ and $L(\mu)$ are in the same block. So, $\bar{\text{wt}}^{-1}( \gamma)$ is a union of certain blocks of $\B(\delta)$-lfdmod for
any  $\gamma=\bar{\text{wt}(\lambda)}\in P_\theta$ and $\lambda\in \Lambda$.
The following result gives the explicit decomposition of $\bar{\text{wt}}^{-1}( \gamma)$   into blocks of $\B(\delta)$-lfdmod.

\begin{Theorem}\label{main} Suppose  $\lambda, \mu \in \Lambda$ and $d=\frac\delta2-1$.
\begin{enumerate}\item  Simple $\B(\delta)$-modules $L(\lambda^t)$ and $L(\mu^t)$ are in the same block if and only if there is a $w\in W_\infty$ such that $w(\mathbf i_{d, \lambda})=\mathbf i_{d, \mu}$.
\item Suppose  $\gamma=\bar{\text{wt}}(\lambda)$ and  $\mathbf i_{d, \lambda^\t}=(i_1, i_2, \ldots)$.
\begin{itemize}\item[(1)] $\bar {\text{wt}}^{-1} (\gamma)$ is a single block of $\B(\delta)$-lfdmod if  either $\delta$ is odd or $\delta$ is even and  $i_k=0$ for some $k\ge 1$. \item [(2)] $\bar {\text{wt}}^{-1} (\gamma)$    is a disjoint union of  two different blocks of  $\B(\delta)$-lfdmod if
    $\delta$ is even and $i_k\neq 0$ for all $k\ge 1$. More explicitly, one block contains $\lambda$ and the another block contains $\mu$ such that $ \mathbf i_{d, \mu^t}=(-i_k, i_1, \cdots, i_{k-1}, i_{k+1}, \ldots)$ for some  $k$ sufficiently large. \end{itemize}\end{enumerate}
   In any case, each block contains an infinite number of simple modules and all blocks of $\B(\delta)$-lfdmod can be obtained in this way.
\end{Theorem}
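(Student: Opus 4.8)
The plan is to transport the block problem entirely into the combinatorics of $\mathbb{I}_d^-$ developed in Section~2 and then read off the answer from Theorem~\ref{weyl}. By Theorem~\ref{cateofg} the assignment $\lambda\mapsto\mathbf{i}_{d,\lambda}$ identifies $[\Delta(\lambda^t)]$ with the basis vector $w_{\mathbf{i}_{d,\lambda}}$, so each simple $L(\lambda^t)$ is attached to $\mathbf{i}_{d,\lambda}$; and by Lemma~\ref{key1} its $\mathfrak{g}_\delta$-weight is $\bar{\text{wt}}(\lambda^t)=\bar{\text{wt}_d}(\mathbf{i}_{d,\lambda})$. Thus part (1) amounts to showing that, under this identification, the block equivalence relation on $\Lambda$ coincides with the relation ``lie in a common $W_\infty$-orbit'' on $\mathbb{I}_d^-$. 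Part (2) and the closing assertions are then bookkeeping: Theorem~\ref{weyl} says each fiber $\bar{\text{wt}_d}^{-1}(\gamma)$ is a single $W_\infty$-orbit (for $\delta$ odd, or $\delta$ even with some $i_k=0$) or a disjoint union of exactly two orbits (for $\delta$ even with all $i_k\neq0$), and that every orbit is infinite. Combined with part (1) this yields the one-block/two-block dichotomy of (2), the infinitude of each block, and---since every simple is $L(\lambda^t)$ for some $\lambda$ by Theorem~\ref{hd1}(1)---the fact that all blocks arise in this way.

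For the forward direction of (1) I would first invoke \cite[Corollary~4.7]{RS2}: if $L(\lambda^t)$ and $L(\mu^t)$ lie in the same block then $\bar{\text{wt}}(\lambda^t)=\bar{\text{wt}}(\mu^t)$, so $\mathbf{i}_{d,\lambda}$ and $\mathbf{i}_{d,\mu}$ lie in one fiber of $\bar{\text{wt}_d}$. In the cases governed by Theorem~\ref{weyl}(1) this fiber is a single $W_\infty$-orbit and the forward direction is immediate. For the converse I would establish a positive linking statement: whenever $\mathbf{i}_{d,\mu}$ is obtained from $\mathbf{i}_{d,\lambda}$ by one elementary generator of the $W_\infty$-action that keeps us inside $\mathbb{I}_d^-$ (a single type-$D$ reflection from Lemma~\ref{key000}, followed by reordering), the standard modules $\Delta(\lambda^t)$ and $\Delta(\mu^t)$ share a composition factor, so $L(\lambda^t)$ and $L(\mu^t)$ are linked. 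This is the place to use the categorified $\mathfrak{g}_\delta$-action by the functors $\tilde E_i$ of \cite{RS2}: each elementary move corresponds to adding or removing a box of a fixed $\delta$-content together with its $\theta$-partner, and the induction/restriction adjunctions should produce the required nonzero homomorphism between standards. Taking the transitive closure over a whole orbit then gives ``same orbit $\Rightarrow$ same block'', and together with the $\bar{\text{wt}_d}$-constraint this already settles (1) in all cases of Theorem~\ref{weyl}(1): there $\text{orbit}\subseteq\text{block}\subseteq\text{fiber}=\text{orbit}$, forcing block $=$ orbit.

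The genuinely delicate point, which I expect to be the main obstacle, is the remaining forward case: when $\delta$ is even and all $i_k\neq0$, Theorem~\ref{weyl}(2) presents the fiber as two distinct orbits $W_\infty\mathbf{i}$ and $W_\infty\mathbf{j}$, and I must show these are two different blocks rather than one. This cannot be read off from the $\mathfrak{g}_\delta$-weight, which is constant on the whole fiber; nor can it come from refining the congruence of \cite[Corollary~4.7]{RS2} to the finer lattice $Q^{\theta,+}$ of Proposition~\ref{o2}, because the generator $e_{1/2}+f_{-1/2}$ of $\mathfrak{g}_\delta$ fails to be homogeneous for $P/Q^{\theta,+}$, so $[\Delta(\lambda^t)]$ need not be homogeneous there. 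The separation must therefore be argued categorically: I would show that no chain of composition-factor links can cross between the two orbits, i.e.\ that $[\Delta(\lambda^t):L(\mu^t)]=0$ whenever $\mathbf{i}_{d,\lambda}$ and $\mathbf{i}_{d,\mu}$ lie in different orbits. Concretely I would combine the dominance constraint on composition factors of a standard module in the upper finite highest weight category of Theorem~\ref{hd1}(3) with a precise analysis of which elementary moves can actually occur, showing that every admissible move preserves the orbit---equivalently, establishing a linkage principle for $\B(\delta)$-lfdmod whose linkage classes are exactly the $W_\infty$-orbits. Once this separation is in place the fiber $W_\infty\mathbf{i}\sqcup W_\infty\mathbf{j}$ splits as two blocks, each coinciding with an orbit, completing (1); parts (2) and the concluding statements then follow formally from the orbit count and infinitude supplied by Theorem~\ref{weyl}.
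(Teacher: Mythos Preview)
Your approach differs substantially from the paper's, and the difference is located exactly where your gap lies. The paper does not attempt to prove the linkage principle directly from the categorical $\mathfrak{g}_\delta$-action; instead it reduces the entire question to the known block classification of the finite Brauer algebras $B_n(\delta)$ due to Cox--De Visscher--Martin \cite{CVM0,CVM}. The bridge is \cite[Proposition~6.10]{GRS}: since $\B(\delta)$ is an upper finite (weakly) triangular category, $L(\lambda^t)$ and $L(\mu^t)$ lie in the same block of $\B(\delta)$-lfdmod if and only if for some $n$ the truncations $1_{\ob n}L(\lambda^t)$ and $1_{\ob n}L(\mu^t)$ lie in the same block of the centralizer algebra $1_{\ob n}A1_{\ob n}\cong B_n(\delta)$. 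By \cite[Theorem~4.2]{CVM} this happens precisely when $\lambda$ and $\mu$ are in the same $W_n$-orbit under the dot action with $\rho_n=\sum_{i=1}^n(1-i-\tfrac{\delta}{2})\delta_i$, and one checks directly that this is exactly the condition $w(\mathbf{i}_{d,\lambda})=\mathbf{i}_{d,\mu}$ for some $w\in W_\infty$. Part (a) follows in one stroke, with both implications handled simultaneously; (b) then follows from (a), Lemma~\ref{key1} and Theorem~\ref{weyl} as you say.

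Your proposal has a genuine gap at precisely the point you flag as delicate. For the two-orbit case you must show that composition factors of $\Delta(\lambda^t)$ never leave the $W_\infty$-orbit of $\mathbf{i}_{d,\lambda}$, but the tools you list do not deliver this. The dominance constraint on composition factors of a standard module only bounds which $\mu$ can occur; it does not detect the parity invariant of Proposition~\ref{o2} that distinguishes the two orbits inside a single $\bar{\text{wt}_d}$-fiber. And you have already correctly observed that the refined $P/Q^{\theta,+}$-grading is not preserved by the functor attached to $e_{1/2}+f_{-1/2}$, so the natural categorical invariant fails. Carrying your program through would amount to an independent proof of the Brauer-algebra linkage principle---essentially the content of \cite{CVM0,CVM}---which is nontrivial and is what the paper imports rather than reproves. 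Your converse direction (same orbit $\Rightarrow$ same block) via the $\tilde E_i$-functors is plausible but also only sketched, and becomes unnecessary once the reduction to $B_n(\delta)$ is in hand.
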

\begin{proof} Recall that $A$ is the locally unital algebra associated to $\B(\delta)$.  It is explained in  \cite[Lemma~2.8]{LZ} that  the centralizer subalgebra $1_{\ob n} A 1_{\ob n}$ of $A$ is isomorphic to the Brauer algebra $B_n(\delta)$ in \cite{B}. Graham and Lehrer~\cite{GL} showed that
 $B_n(\delta)$ is a cellular algebra (over an arbitrary field). Since Brauer category $\B(\delta)$ is an upper finite  triangular category in the sense of \cite[Definition~5.31]{BS} and an upper finite triangular category is an upper finite weakly triangular category in the sense of \cite[Definition~2.1]{GRS}, we can use~\cite[Proposition~6.10]{GRS}. This implies that  two simple $A$-modules $L(\lambda^t)$ and $L(\mu^t)$ are in the same block if and only if there is an $\ob n$ such that simple $1_{\ob n} A 1_{\ob n}$-modules  $1_{\ob n} L (\lambda^t)$ and $1_{\ob n} L (\mu^t)$ are in the same block.  In the current case, $1_{\ob n} L (\lambda^t)$ and $1_{\ob n} L (\mu^t)$ are  in fact the simple $B_n(\delta)$-modules $L(\lambda^\t)$ and $L(\mu^t)$ in \cite[Theorem~4.1]{CVM}. This can be checked by comparing the construction of the Specht modules for Brauer algebras in \cite{CVM0} and our construction of standard modules $\Delta(\lambda)$'s in \cite{RS2} and using \cite[Proposition~4.3]{GRS}. The last one establishes an explicit relationship between standard modules of $A$ and Specht modules of $B_n(\delta)$. In \cite[Theorem~4.2]{CVM} Cox, De Visscher and Martin have proved that $1_{\ob n} L (\lambda^t)$ and $1_{\ob n} L (\mu^t)$
are in the same block of $B_n(\delta)$  if and only if $\lambda\in W_n.\mu$, where the dot action is defined as
$$w.\lambda=w(\lambda+\rho_n)-\rho_n$$ and   $\rho_n=\sum_{i=1}^n (1-i-\frac{\delta}{2}) \delta_i$. Here, we identify $\lambda\in \Lambda$ with $\sum_{i=1}^\infty \lambda_i \delta_i$. Obviously, $w.\lambda=\mu$ for $w\in W_n$ and $\lambda, \mu\in \Lambda_n$ if and only if $w.\lambda=\mu$ if we consider $w\in W_\infty$ and $\lambda, \mu\in \Lambda_\infty$ by assuming $\lambda_j=\mu_j=0$ for all $j\ge n+1$. In this case,
$$w.\lambda=w(\lambda+\rho_\infty)-\rho_\infty$$
and $\rho_\infty=\sum_{i=1}^\infty (1-i-\frac{\delta}{2})\delta_i$.  This is equivalent to saying that $\mathbf i_{d,\mu}=w(\mathbf i_{d,\lambda})$, proving (a). Now, (b)  follows from (a), Lemma~\ref{key1} and  Theorem~\ref{weyl} for $d=\frac{\delta}2-1$.
 \end{proof}

%For any $\gamma=\bar{\text{wt}}(\lambda)$, Theorem~\ref{main}(b) shows that the weight space of $\mathfrak g_\delta$-module  $\bigwedge_d^{\infty}\mathbb W$ with weight $\gamma$ corresponds to a single block if either  $\delta$ is odd or $\delta$ is even and
%$0$ appears in $\mathbf i_\lambda$  as an entry. In the remaining case,   the weight space of $\mathfrak g_\delta$-module  $\bigwedge_d^{\infty}\mathbb W$ with weight $\gamma$ can be considered as a union  of two
%different blocks.   So,
Theorem~\ref{main} gives an explicit decomposition of ``block" in \cite[Lemma~4.9(2)]{RS2}. It can be
 generalized properly for cyclotomic Kauffman categories, cyclotomic oriented Brauer categories and cyclotomic Brauer categories. Details will be given elsewhere.

\section{``Central blocks" of  the Brauer category over $\mathbb C$}

Recently, Brundan and Vargas use a subalgebra of the center of the partition category to classify its block~\cite{BW}.  Following their approach,  we define a ``central block" of Brauer category $\B(\delta)$ in Definition~\ref{sim}. Finally, we give an example to illustrate that this "central block" may be bigger than the  $\mfg_\delta$-weight space of the $(\frac\delta2-1)$-th sector of semi-infinite wedge space.
   See Remark~\ref{fre}.

Let  $AB$ be the locally unital algebra   associated to the affine Brauer category $\AB$, i.e.,
$$AB=\bigoplus_{\ob a,\ob b}\Hom_{\AB}(\ob a,\ob b) \text { such that } 1_\ob b AB1_\ob a =\Hom_{\AB}(\ob a,\ob b).$$
For any $n\in \mathbb N$,  define   $AB_n=1_\ob n AB1_\ob n$. It is proved in \cite[Theorem~3.14]{RS3} that $AB_n$ is isomorphic to the affine Nazarov-Wenzl algebra $W_n^{\rm aff}$  over $\mathbb C$ (called  affine Wenzl algebra previously  in \cite{Na}).
 Let   \begin{equation}\label{ou}  O(u)=u-{\frac{1}{2}}+\sum_{a=0}^\infty
\frac{\Delta_a} {u^{a}}\in AB_0[[u^{-1}]],\end{equation}
where $\Delta_a$ is defined in \eqref{defofdelta}.

\begin{Lemma} \label{O} $O(u)O(-u)=(\frac{1}{2}-u)(\frac{1}{2}+u)$.\end{Lemma}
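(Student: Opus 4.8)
The plan is to expand $O(u)O(-u)$ as a Laurent series in $u^{-1}$ with coefficients in the ring $AB_0=\End_{\AB}(\ob 0)$, and to match coefficients of each power of $u$. The ring $AB_0$ is commutative (endomorphisms of the monoidal unit form a commutative monoid, by Eckmann--Hilton), so in particular $\Delta_a\Delta_b=\Delta_b\Delta_a$, which I will use freely. Writing $g(u)=\sum_{a\ge 0}\Delta_a u^{-a}$, we have $O(u)=(u-\tfrac12)+g(u)$ and $O(-u)=(-u-\tfrac12)+g(-u)$ with $g(-u)=\sum_{a\ge0}(-1)^a\Delta_a u^{-a}$. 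The product of the two linear parts is $(u-\tfrac12)(-u-\tfrac12)=\tfrac14-u^2=(\tfrac12-u)(\tfrac12+u)$, which is already the desired right-hand side; hence the entire content of the lemma is that the remaining correction
\[
(u-\tfrac12)g(-u)+(-u-\tfrac12)g(u)+g(u)g(-u)
\]
vanishes identically.

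First I would treat the two terms linear in $g$. Collecting powers of $u$, the sum $(u-\tfrac12)g(-u)+(-u-\tfrac12)g(u)$ equals $\sum_{a\ge0}\big((-1)^a-1\big)\Delta_a u^{1-a}-\tfrac12\sum_{a\ge0}\big((-1)^a+1\big)\Delta_a u^{-a}$, that is, $-2\sum_{a\ \text{odd}}\Delta_a u^{1-a}-\sum_{a\ \text{even}}\Delta_a u^{-a}$. In particular the $u^1$ term cancels, and only non-positive even powers of $u$ survive, the coefficient of $u^{-2m}$ being $-2\Delta_{2m+1}-\Delta_{2m}$. For the quadratic term, the coefficient of $u^{-m}$ in $g(u)g(-u)$ is $\sum_{a=0}^m(-1)^{m-a}\Delta_a\Delta_{m-a}=(-1)^m S_m$, where $S_m:=\sum_{a=0}^m(-1)^a\Delta_a\Delta_{m-a}$.

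The odd powers then take care of themselves: for $m$ odd, the substitution $a\mapsto m-a$ together with commutativity of $AB_0$ gives $S_m=(-1)^m S_m=-S_m$, hence $S_m=0$, so $g(u)g(-u)$ contributes nothing in odd degree (and the linear part contributed nothing there either). For an even power $u^{-2m}$ the total correction coefficient is $-2\Delta_{2m+1}-\Delta_{2m}+S_{2m}$, and this is precisely the admissibility relation \eqref{admomega}: reindexing the sum there by $a=j-1$ turns its right-hand side into $S_{k-1}$, so \eqref{admomega} for odd $k$ reads $2\Delta_k+\Delta_{k-1}=S_{k-1}$, which for $k=2m+1$ is exactly $-2\Delta_{2m+1}-\Delta_{2m}+S_{2m}=0$. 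Thus every correction coefficient vanishes and $O(u)O(-u)=(\tfrac12-u)(\tfrac12+u)$.

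The one genuine input, as opposed to bookkeeping, is that the bubbles $\Delta_a\in AB_0$ satisfy the admissibility relations \eqref{admomega}; this is what makes the identity true, and it is available from the structure of $\AB$ as a $\mathbb C[\Delta_0,\Delta_2,\Delta_4,\dots]$-linear category in \cite[Theorem~B]{RS3}, under which the odd-indexed bubbles are determined precisely by \eqref{admomega}. The main thing to watch in the write-up is the sign-and-power bookkeeping in the two halves above and the clean appeal to commutativity of $\End_{\AB}(\ob 0)$ in the odd-degree cancellation; once those are in place, the lemma is just the statement that the single generating-function identity repackages the infinitely many scalar relations \eqref{admomega}.
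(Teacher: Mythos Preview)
Your proof is correct and follows essentially the same approach as the paper: expand $O(u)O(-u)-(\tfrac12-u)(\tfrac12+u)$ as a Laurent series, show that the odd-degree coefficients vanish by the antisymmetry $a\mapsto m-a$ (using commutativity of $AB_0$), and show that the even-degree coefficients vanish by the bubble relation $2\Delta_{2k+1}=-\Delta_{2k}+\sum_{j=1}^{2k+1}(-1)^{j-1}\Delta_{j-1}\Delta_{2k+1-j}$. The only imprecision is your citation: relation \eqref{admomega} is stated for the \emph{specializations} $\gamma_k$, not for the bubbles $\Delta_k$ themselves; the identity you need among the $\Delta_k$'s in $AB_0$ is \cite[Lemma~3.4]{RS3} (which is what the paper invokes), not \cite[Theorem~B]{RS3}.
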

\begin{proof} Let $\phi=O(u)O(-u)-(\frac{1}{2}-u)(\frac{1}{2}+u)$. By~\eqref{ou}, we have
$$\begin{aligned} \phi
& =(u-\frac{1}{2})\sum_{a=0}^\infty
\frac{\Delta_a} {(-u)^{a}}+\sum_{a=0}^\infty
\frac{\Delta_a} {u^{a}}(-u-\frac{1}{2})+\sum_{a=0}^\infty
\frac{\Delta_a} {u^{a}}\sum_{a=0}^\infty
\frac{\Delta_a} {(-u)^{a}}
\\&=\sum_{i\geq -1}(((-1)^{i+1}-1)\Delta_{i+1}-\frac{1}{2}((-1)^i+1)\Delta_{i}+\sum_{j=0}^i(-1)^{i-j}\Delta_i\Delta_{i-j})u^{-i}.\end{aligned}$$
We prove $\phi=0$ by showing that any coefficient $b_i$ of $u^{-i}$ in  $\phi$ is zero.
Obviously,   $b_{i}=0$ for any $i\le -1$. We have $b_{2k}=0$ since $2\Delta_{2k+1}=-\Delta_{2k}+\sum_{j=1}^{2k+1} (-1)^{j-1} \Delta_{j-1}\Delta_{2k+1-j}$ for any non-negative integer $k$ (see \cite[Lemma~3.4]{RS3}).
Finally,    $$b_{2k+1}=\sum_{j=0}^{2k+1}(-1)^{2k+1-j}\Delta_{j}\Delta_{2k+1-j}
=\sum_{j=0}^k\Delta_j\Delta_{2k+1-j}((-1)^{2k+1-j}+(-1)^{j})=0.$$
\end{proof}

Let $\begin{tikzpicture}[baseline = 5pt, scale=0.5, color=\clr]
     \draw[-,thick] (0,-0.15) to (0,1.15); \draw (0,0.5) \bdot;  \draw (0.5,0.5) node{\footnotesize{$x^r$}};\end{tikzpicture} = \begin{tikzpicture}[baseline = 5pt, scale=0.5, color=\clr]
     \draw[-,thick] (0,-0.15) to (0,1.15); \draw (0,0.5) \bdot;  \draw (0.4,0.5) node{\footnotesize{$r$}}; \end{tikzpicture}$. So, $f(\begin{tikzpicture}[baseline = 5pt, scale=0.5, color=\clr]
     \draw[-,thick] (0,-0.15) to (0,1.15); \draw (0,0.5) \bdot;  \end{tikzpicture})=\begin{tikzpicture}[baseline = 5pt, scale=0.5, color=\clr]
     \draw[-,thick] (0,-0.15) to (0,1.15); \draw (0,0.5) \bdot;  \draw (0.75,0.5) node{\footnotesize{$f(x)$}}; \end{tikzpicture}$
     for any $f(x)\in \mathbb C[[x]]$.
      %Mimicking arguments in the proof of  \cite[(4.8)]{Na},  we have the following result

      \begin{Lemma}  \label{change} We have
      $$ \begin{tikzpicture}[baseline = 5pt, scale=0.5, color=\clr]
     \draw[-,thick] (0,0.15) to (0,1.15); \end{tikzpicture}\text{ }  O(u)=O(u) \text{ }
     \begin{tikzpicture}[baseline = 10pt, scale=0.5, color=\clr]
            \draw[-,thick] (0,0) to (0,2);
            \draw(0,1) \bdot;  \draw (1,1) node{\footnotesize{$\gamma_x(u)$}};
            %\draw[fill=\clr,thick] (-0.15,0.97)--(0,1.2)--(0.15,1)--(-0.1,0.97);
                    \end{tikzpicture}$$
                      where $ \gamma_x(u)=\frac{(u+x)^2-1}{(u-x)^2-1}\frac{(u-x)^2}{(u+x)^2}$, and $\gamma_x(u)$  has to be interpreted as a formal Laurent series in $\mathbb C [x][[u^{-1}]]$.
                       \end{Lemma}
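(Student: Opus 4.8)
The plan is to prove the identity diagrammatically, by transporting the vertical strand past the two legs of the closed bubble $O(u)$ — from the right of the bubble on the left-hand side to the right of the bubble on the right-hand side — so that the rational factor $\gamma_x(u)$ records the accumulated effect of the strand sweeping across those legs. First I would isolate the genuinely interacting part of $O(u)$. By \eqref{ou} we have $O(u)=u-\tfrac12+\sum_{a\ge0}\Delta_a u^{-a}$, and the scalar $u-\tfrac12$ passes the strand freely, so everything is concentrated in $\sum_a\Delta_a u^{-a}$. Writing $x=\xdot$ for the dot variable as in the statement, and using the definition \eqref{defofdelta} of $\Delta_a$ as $\lcap$ precomposed with $a$ dots on the left leg and with $\lcup$, I would repackage this series as a single resolvent $\sum_a\Delta_a u^{-a}=\lcap\circ\bigl(\tfrac{u}{u-x}\otimes 1_{\ob1}\bigr)\circ\lcup$, where $\tfrac{u}{u-x}=\sum_{a\ge0}u^{-a}x^{a}$ is a well-defined element of $\End_{\AB}(\ob1)[[u^{-1}]]$.

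The core of the argument is to commute the external strand past this resolvent one leg at a time. Whenever the strand crosses a leg carrying dots, I would invoke the dot--crossing relation \eqref{AB1}, which moves a dot through a crossing at the cost of a correction equal to the right turnback minus the identity; substituting this into the geometric series and resumming yields, on the one hand, a rational factor in $u$ and $x$ deposited on the external strand, and on the other hand closed loops which collapse — via the Brauer relations \eqref{B1}--\eqref{B4} and the definition of $\Delta_a$ — into powers of the $\Delta$'s. The decisive point is the sign: transporting a dot across the top cap versus the bottom cup reverses its sign by \eqref{AB2} (equivalently, $\lcap$ annihilates the symmetric dot $x\otimes1+1\otimes x$), so the leg nearer the cup contributes $\tfrac{(u-x)^2}{(u-x)^2-1}$ while the leg nearer the cap contributes $\tfrac{(u+x)^2-1}{(u+x)^2}$, and their product is exactly $\gamma_x(u)$; this matches the rewriting $\gamma_x(u)=\bigl(1-(u+x)^{-2}\bigr)\big/\bigl(1-(u-x)^{-2}\bigr)$.

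I expect the bookkeeping of these correction terms to be the main obstacle: one must verify that the collapsed loops recombine into precisely the series prescribed by \eqref{ou} and the admissibility relation \eqref{admomega}, leaving the strand decorated by $\gamma_x(u)$ and nothing more. Here Lemma~\ref{O} should be the organizing tool, since the relation $O(u)O(-u)=(\tfrac12-u)(\tfrac12+u)$ encodes exactly the $u\mapsto-u$ symmetry that the two-leg product must respect, and it lets one replace awkward combinations of odd and even $\Delta_a$ by closed expressions. Finally, a preliminary step I would record is the well-definedness of $\gamma_x(u)$: both denominators $(u-x)^2-1$ and $(u+x)^2$ have leading term $u^2$ and are therefore units in $\mathbb C[x][[u^{-1}]]$, so every geometric resummation above converges coefficientwise in $u^{-1}$. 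As a sanity check on the signs, the expansion begins $\gamma_x(u)=1+4x\,u^{-3}+O(u^{-4})$, so the strand first acquires a dot only at order $u^{-3}$, which I would confirm against the lowest-order terms produced by the sliding computation.
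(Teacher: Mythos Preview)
Your approach is genuinely different from the paper's.  The paper does not compute anything: it invokes \cite[Theorem~3.14]{RS3} to identify $AB_n$ with the affine Nazarov--Wenzl algebra $W_n^{\rm aff}$, observes that under this isomorphism the bubble series $1_{\ob{k-1}}\otimes O(u)\otimes 1_{\ob{n-k-1}}$ corresponds to Nazarov's generating function $W_k(u)+u-\tfrac12$, and then quotes Nazarov's identity \cite[(4.8)]{Na} verbatim.  Your proposal is to redo Nazarov's computation diagrammatically inside $\AB$, sliding the strand through the dotted bubble using \eqref{AB1}--\eqref{AB2}.  That is a legitimate strategy --- it is essentially Nazarov's own argument translated into string calculus --- and it has the advantage of being self-contained, at the cost of being substantially longer than a one-line citation.

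Two cautions about your outline.  First, the clean factorisation you propose, with one leg contributing $\frac{(u-x)^2}{(u-x)^2-1}$ and the other $\frac{(u+x)^2-1}{(u+x)^2}$, is a useful mnemonic for the final answer but is not how the computation actually unfolds: each correction term from \eqref{AB1} produces a cap--cup pair that immediately interacts with \emph{both} arcs of the bubble, so the two legs do not decouple during the calculation.  You should expect instead a recursion for the coefficients of the slid series in terms of lower $\Delta_a$'s, which then closes up to $O(u)\cdot\gamma_x(u)$.  Second, your appeal to \eqref{admomega} is misplaced: that is the admissibility condition imposed on \emph{specialised} parameters, not a relation in $\AB$ itself.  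The relation you actually need is the odd-bubble identity $2\Delta_{2k+1}=-\Delta_{2k}+\sum_{j}(-1)^{j-1}\Delta_{j-1}\Delta_{2k+1-j}$ in $\End_{\AB}(\ob 0)$, which the paper cites from \cite[Lemma~3.4]{RS3} in the proof of Lemma~\ref{O}.  Lemma~\ref{O} itself encodes this relation in generating-function form and is indeed a reasonable organising tool, though it enters less directly than you suggest.  Your low-order sanity check $\gamma_x(u)=1+4x\,u^{-3}+O(u^{-4})$ is correct.
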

\begin{proof} Thanks to \cite[Theorem~3.14]{RS3},  $W_n^{\rm aff}\cong AB_n$ as $\mathbb C$-algebras. When $\Delta_0$ is specialized at any complex number $N$, $W_n^{\rm aff}$ is the affine Wenzl algebra in \cite{Na}. In this case,
the required isomorphism sends  the diagram $1_{\ob k-\ob 1}\otimes O(u) \otimes 1_{\ob n-\ob k-\ob 1}$
 to  $W_{k}(u)+u-\frac12$ in \cite[(4.8)]{Na}. Therefore, Lemma~\ref{change} follows from \cite[(4.8)]{Na} when $\Delta_0$ is specialized at an arbitrary  complex number $N$. So Lemma~\ref{change}  holds when $\Delta_01_{\ob n}$ is a central element in $W_n^{\rm aff}$.\footnote{Nazarov's arguments on the proof of \cite[(4.8)]{Na} are still available if one assume that $N$ is a central element.}\end{proof}

    \begin{Lemma} Let   $c_n(u):=O(-u)\otimes 1_{\ob n}\otimes O(u)\in AB_n[[u^{-1}]]$. Then  $$c_n(u) = (\frac{1}{2}-u)(\frac{1}{2}+u) \prod_{i=1}^n\frac{(u+x_i)^2-1}{(u-x_i)^2-1}\frac{(u-x_i)^2}{(u+x_i)^2},$$
where $ x_i = \begin{tikzpicture}[baseline = 12.5, scale=0.35, color=\clr]
       \draw[-,thick](-0.4,1.1)to[out= down,in=up](-0.4,2.5);
       \draw(0.5,1.1) node{$ \cdots$}; \draw(0.5,2.5) node{$ \cdots$};
       \draw[-,thick](1.8,1.1)to (1.8,2.5);
       %\draw[-,thick] (2,1) to[out=up, in=down] (3,4);
       \draw(-0.4,3)node{\tiny$1$};
       \draw(3,3)node{\tiny$i$}; \draw(5.5,3)node{\tiny$n$};
        \draw[-,thick] (3,1) to[out=up, in=down] (3,2.5);
         \draw[-,thick](3.6,1.1)to[out= down,in=up](3.6,2.5);\draw (3,1.8)\bdot;
         \draw(4.5,1.1) node{$ \cdots$}; \draw(4.5,2.5) node{$ \cdots$};
         \draw[-,thick](5.5,1.1)to[out= down,in=up](5.5,2.5);
     \end{tikzpicture}$.
\end{Lemma}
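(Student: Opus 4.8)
The plan is to push the rightmost factor $O(u)$ leftward through all $n$ strands by means of Lemma~\ref{change}, collecting exactly one factor $\gamma_{x_i}(u)$ per strand, and then to collapse the resulting pair $O(-u)\otimes O(u)$ using Lemma~\ref{O}. I would organize this as an induction on $n$. The base case $n=0$ reads $c_0(u)=O(-u)\otimes O(u)=O(-u)O(u)$; since $AB_0\cong\mathbb C[\Delta_0,\Delta_2,\Delta_4,\ldots]$ is commutative, this equals $O(u)O(-u)=(\tfrac12-u)(\tfrac12+u)$ by Lemma~\ref{O}, and the empty product is $1$, which is the asserted formula.

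For the inductive step I would write $c_n(u)=O(-u)\otimes 1_{\ob{n-1}}\otimes 1_{\ob1}\otimes O(u)$ and apply Lemma~\ref{change}, tensored with the identity on the untouched strands, to the local configuration consisting of the $n$-th strand immediately to the left of $O(u)$. This slides $O(u)$ past the $n$-th strand and deposits the decoration $\gamma_{x_n}(u)=\frac{(u+x_n)^2-1}{(u-x_n)^2-1}\frac{(u-x_n)^2}{(u+x_n)^2}$ on it, giving $c_n(u)=\big(O(-u)\otimes 1_{\ob{n-1}}\otimes O(u)\big)\cdot\gamma_{x_n}(u)=c_{n-1}(u)\cdot\gamma_{x_n}(u)$, where $\gamma_{x_n}(u)$ now acts only on the $n$-th strand. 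Applying the inductive hypothesis to $c_{n-1}(u)$ and multiplying in the last factor yields $c_n(u)=(\tfrac12-u)(\tfrac12+u)\prod_{i=1}^n\gamma_{x_i}(u)$, as required.

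The two points needing care, and which I regard as the only (mild) obstacle, are as follows. First, one must justify that Lemma~\ref{change} may be invoked inside $AB_n[[u^{-1}]]$ rather than merely $AB_1[[u^{-1}]]$; this is just the tensoring of that relation with the identity on the remaining strands, legitimized by the interchange law of the monoidal category, and it should be written out so that the local move is unambiguous. Second, once $O(u)$ has crossed the $n$-th strand the decoration $\gamma_{x_n}(u)$ sits to its right and no longer interacts with $O(u)$, while the various $\gamma_{x_i}(u)$ commute among themselves because they are formal Laurent series in dots on distinct strands; hence the collected factors assemble into the product $\prod_{i=1}^n\gamma_{x_i}(u)$ with no ordering ambiguity. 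With these observations the computation is routine, each displayed lemma being used once per strand (Lemma~\ref{change}) and once at the end (Lemma~\ref{O}).
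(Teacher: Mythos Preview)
Your argument is correct and is exactly the approach the paper has in mind: its proof is the single line ``follows immediately from Lemmas~\ref{O}--\ref{change},'' and your induction simply spells out that one applies Lemma~\ref{change} once per strand to slide $O(u)$ leftward and then collapses $O(-u)O(u)$ with Lemma~\ref{O}. The only cosmetic point is that $c_{n-1}(u)$ lives in $AB_{n-1}$, so in the recursion one should write $(c_{n-1}(u)\otimes 1_{\ob 1})\cdot\gamma_{x_n}(u)$ rather than $c_{n-1}(u)\cdot\gamma_{x_n}(u)$; with that adjustment the write-up is complete.
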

\begin{proof}The following result follows immediately from Lemmas~\ref{O}--\ref{change}.\end{proof}

\begin{Lemma}\label{sca}$C(u)=(c_n(u))_{n\in \mathbb N} \in Z(AB)[[u^{-1}]]$ where  $$Z(AB):=\{(z_n)_{n\in \mathbb N}\in\prod_{n=0}^\infty AB_n \mid z_m a=a z_n, \forall a\in 1_{\ob m} AB 1_{\ob n}, m, n\in \mathbb N\} .$$
% Then  , where  $C(u)=(c_n(u))\in \prod_{n= 0}^\infty AB_n[[u^{-1}]]$.

\end{Lemma}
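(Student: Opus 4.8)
The plan is to prove centrality directly from the boundary placement of the two bubbles, not from the explicit product expression for $c_n(u)$. I would first identify $Z(AB)$ with the center of the category $\AB$, i.e.\ the families $(z_n)$ that are natural with respect to every morphism, and observe that the coefficients of $O(u)$ and $O(-u)$ all lie in $\End_{\AB}(\ob 0)=\mathbb C[\Delta_0,\Delta_2,\Delta_4,\ldots]$. Since $\AB$ is $\mathbb C[\Delta_0,\Delta_2,\ldots]$-linear---equivalently, since endomorphisms of the monoidal unit are always central in a monoidal category---these coefficients are central. Because in $c_n(u)=O(-u)\otimes 1_{\ob n}\otimes O(u)$ the two factors $O(-u)$ and $O(u)$ occupy the outer boundary while $1_{\ob n}$ sits in the middle, the interchange law should let any morphism slide freely between them.

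Concretely, I would fix $a\in 1_{\ob m}AB1_{\ob n}=\Hom_{\AB}(\ob n,\ob m)$ and write it as $a=1_{\ob 0}\otimes a\otimes 1_{\ob 0}$, using that $\ob 0$ is the unit so $\ob 0\otimes\ob n\otimes\ob 0=\ob n$. The interchange law then gives
$$c_m(u)\circ a=(O(-u)\otimes 1_{\ob m}\otimes O(u))\circ(1_{\ob 0}\otimes a\otimes 1_{\ob 0})=O(-u)\otimes a\otimes O(u),$$
and likewise
$$a\circ c_n(u)=(1_{\ob 0}\otimes a\otimes 1_{\ob 0})\circ(O(-u)\otimes 1_{\ob n}\otimes O(u))=O(-u)\otimes a\otimes O(u).$$
Hence $c_m(u)\,a=a\,c_n(u)$, which is exactly the defining relation of $Z(AB)$. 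Reading this equality coefficient by coefficient in $u^{-1}$ shows that every coefficient of $C(u)$ lies in $Z(AB)$, that is, $C(u)\in Z(AB)[[u^{-1}]]$.

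I expect essentially no real obstacle here: the only things to check with care are that the two bubbles $O(\pm u)$ genuinely sit on the two ends, so that the interchange law applies with $a$ untouched in the middle, and that the argument is purely formal, so passing to power series in $u^{-1}$ is harmless. It is worth noting that the alternative route---verifying centrality from the product formula $c_n(u)=(\tfrac12-u)(\tfrac12+u)\prod_{i=1}^n\gamma_{x_i}(u)$ of the previous lemma---would be genuinely harder, since the dot generators $x_i$ do not commute with the crossing $\swap$ by \eqref{AB1}, so one would have to invoke the sliding relation of Lemma~\ref{change} strand by strand. The boundary-bubble formulation sidesteps this entirely, and is precisely why the family is defined as $O(-u)\otimes 1_{\ob n}\otimes O(u)$.
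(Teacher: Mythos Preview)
Your proposal is correct and is essentially the same argument as the paper's: the paper proves $c_m(u)\,f=f\,c_t(u)$ by a short string-diagram computation that slides the two boundary bubbles $O(-u)$ and $O(u)$ past $f$, which is exactly the interchange-law identity $c_m(u)\circ a=O(-u)\otimes a\otimes O(u)=a\circ c_n(u)$ you wrote out algebraically. Your additional remarks about centrality of $\End_{\AB}(\ob 0)$ and the difficulty of the alternative route via the product formula are sound but not needed for the proof itself.
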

\begin{proof} Suppose $f\in 1_{\ob m} AB1_{\ob t}$. Then
 $$\begin{tikzpicture}[baseline = 3mm, color=\clr]
                \draw[-,thick] (0, 0.95)to[out=up,in=down](0,0.95+0.6); \draw(0.5, 0.88+0.4) node{$ \cdots$};   \draw[-,thick] (1,0.95)to[out=up,in=down](1,0.95+0.6);
                \draw (-0.1,0.4) rectangle (1.2,0);% 鸥脴脨脦
                \draw(0.5, 0.2) node{$f$};
                \draw (1.2,0.55) rectangle (-0.1,0.95); \draw[-,thick] (0, 0.4)to[out=up,in=down](0,0.55); \draw(0.5, 0.48) node{$ \cdots$};   \draw[-,thick] (1,0.4)to[out=up,in=down](1,0.55);
                \draw(0.5, 0.7) node{$c_m(u)$}; %\draw[->,thick] (1.06,0.4)to[out=up,in=down](1.06,0.85);
                \draw[-,thick] (0.2, 0)to[out=up,in=down](0.2,-0.55); \draw(0.5, -0.2) node{$ \cdots$};   \draw[-,thick] (0.8,0)to[out=up,in=down](0.8,-0.55);
    \end{tikzpicture}=
    \begin{tikzpicture}[baseline = 3mm, color=\clr]
                \draw (-0.1,0.4) rectangle (1.2,0);% 鸥脴脨脦
                \draw(0.5, 0.2) node{$f$};
                \draw(-0.8, 0.8) node{$O(-u)$};
                 \draw(1.5, 0.8) node{$O(u)$};
               \draw[-,thick] (0, 0.4)to[out=up,in=down](0,1.05); \draw(0.5, 0.68) node{$ \cdots$};   \draw[-,thick] (1,0.4)to[out=up,in=down](1,1.05);
                \draw[-,thick] (0.2, 0)to[out=up,in=down](0.2,-0.35); \draw(0.5, -0.2) node{$ \cdots$};   \draw[-,thick] (0.8,0)to[out=up,in=down](0.8,-0.35);
    \end{tikzpicture}=
    \begin{tikzpicture}[baseline = 3mm, color=\clr]
                \draw (-0.1,0.4) rectangle (1.2,0);% 鸥脴脨脦
                \draw(0.5, 0.2) node{$f$};
               \draw[-,thick] (0, 0.4)to[out=up,in=down](0,1.05); \draw(0.5, 0.68) node{$ \cdots$};   \draw[-,thick] (1,0.4)to[out=up,in=down](1,1.05);
                \draw[-,thick] (0.2, 0)to[out=up,in=down](0.2,-0.65); \draw(0.5, -0.2) node{$ \cdots$};   \draw[-,thick] (0.8,0)to[out=up,in=down](0.8,-0.65);
                \draw(-0.8, -0.5) node{$O(-u)$};
                 \draw(1.5, -0.5) node{$O(u)$};
    \end{tikzpicture}=\begin{tikzpicture}[baseline = 3mm, color=\clr]
                \draw[-,thick] (0, 0.95)to[out=up,in=down](0,0.95+0.6); \draw(0.5, 0.88+0.4) node{$ \cdots$};   \draw[-,thick] (1,0.95)to[out=up,in=down](1,0.95+0.6);
                \draw (-0.1,0.4) rectangle (1.2,0);% 鸥脴脨脦
                \draw(0.5, 0.2) node{$c_t(u)$};
                \draw (1.2,0.55) rectangle (-0.1,0.95); \draw[-,thick] (0.2, 0.4)to[out=up,in=down](0.2,0.55); \draw(0.5, 0.48) node{$ \cdots$};   \draw[-,thick] (0.8,0.4)to[out=up,in=down](0.8,0.55);
                \draw(0.5, 0.7) node{$f$}; %\draw[->,thick] (1.06,0.4)to[out=up,in=down](1.06,0.85);
                \draw[-,thick] (0.2, 0)to[out=up,in=down](0.2,-0.55); \draw(0.5, -0.2) node{$ \cdots$};   \draw[-,thick] (0.8,0)to[out=up,in=down](0.8,-0.55);
    \end{tikzpicture},
$$ and  the result follows.
\end{proof}
By the  arguments after \cite[Assumption 2.1]{RS3}, $\B(\delta)$ is the quotient category of $\AB$ with respect to the polynomial $f(x)=x-\delta$ and $\gamma=\{\delta(\frac{\delta-1}{2})^i\mid i\in \mathbb N\}$.  So, a $\B(\delta)$-module $M$ can be regarded as an $\AB$-module. Suppose $\lambda\in \Lambda_n$. In \cite[Remark~3.13]{RS2}  we have explained  that  $1_n\Delta(\lambda)$ is the cell module of Brauer algebra $B_n(\delta)$ in \cite{B}. In fact, it is isomorphic to the Specht module $S^\lambda$ for the symmetric group $\mathfrak S_n$, and hence    $1_n\Delta(\lambda)$ is an  irreducible $B_n(\delta)$-module. Now, we use well-known results on the orthogonal form of a Specht module for the symmetric group in the semisimple case~\cite{GJ}.  $1_n\Delta(\lambda)$ has an orthogonal basis, say $v_{\mathbf t}$, where $\mathbf t$ ranges over all standard $\lambda$-tableaux. Recall that a standard $\lambda$-tableau is obtained from the Young diagram of the partition $\lambda\in \Lambda_n$ by inputting numbers $1, 2, \ldots, n$ into the boxes such that the entries are increasing from left to right down columns.
 So, $\mathbf t$ corresponds to a unique up-tableaux $(\mathbf t_0, \mathbf t_1, \ldots, \mathbf t_n)$ such that $\mathbf t_i$ is obtained from $\mathbf t$ by removing all entries strictly bigger than $ i$.
 Of course, $\mathbf t_i$ corresponds to a  partition of $i$.
  Let $y=\mathbf t_{i}\setminus \mathbf t_{i-1}$. In other words, $y\cup \mathbf t_{i-1}=\mathbf t_i$. Then  $x_i$ acts on $v_{\mathbf t}$ as the scalar $ c_\delta(y)$ defined in \eqref{con}.  By Lemma~\ref{sca}, $c_n(u)$ acts on $1_n\Delta(\lambda)$  as the scalar
\begin{equation}\label{con2}C(u)|_{\Delta(\lambda)}:=c_n(u)|_{1_n\Delta(\lambda)}=(\frac{1}{2}-u)(\frac{1}{2}+u)\prod_{y\in \lambda}\frac{(u+c_\delta(y))^2-1}{(u-c_\delta(y))^2-1}\frac{(u-c_\delta(y))^2}{(u+c_\delta(y))^2}\in \mathbb{C}[[u]].\end{equation}

Motivated by \cite[(5.7)]{BV} on partition categories, we give the following definition.
\begin{Defn}\label{sim}For any $\lambda, \mu\in\Lambda$, we say $\lambda\sim\mu$ if $C(u)|_{\Delta(\lambda)}=C(u)|_{\Delta(\mu)}$.\end{Defn}
The equivalence classes of $\Lambda$ with respect to $\sim$ are called ``central blocks".  It is defined via the subalgebra of the center  generated by
$c^{(r)}$, $r\in \mathbb N$, where  $\sum_{r=0}^\infty c^{(r)} u^{-r}:=(c_n(u))_{n\in \mathbb N}$.

\begin{Prop}\label{cen}Suppose $\lambda, \mu\in\Lambda$. \begin{enumerate} \item If $\bar{\text{wt}}(\lambda )=\bar{\text{wt}}(\mu) $, then  $\lambda\sim\mu$.
 \item If $\delta$ is even and $\lambda\sim \mu$, then
  $\bar{\text{wt}}(\lambda)=\bar{ \text{wt}}( \mu) $.\end{enumerate}
\end{Prop}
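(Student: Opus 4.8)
The plan is to reduce both conditions to the combinatorics of the multiset of contents of $\lambda$ and $\mu$ and then to compare them through a short finite-difference computation. First I would strip off the common prefactor: by \eqref{con2} the scalar $C(u)|_{\Delta(\lambda)}$ equals $(\tfrac14-u^2)\,F_\lambda(u)$, where $F_\lambda(u)=\prod_{y\in\lambda}g_{c_\delta(y)}(u)$ and $g_c(u)=\frac{((u+c)^2-1)(u-c)^2}{((u-c)^2-1)(u+c)^2}$. Each $g_c$ tends to $1$ as $u\to\infty$, so $F_\lambda$ is a rational function with $F_\lambda(\infty)=1$ and is therefore determined by its divisor. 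Writing $n_c$ for the number of boxes $y$ of $\lambda$ with $c_\delta(y)=c$, a direct reading of the zeros and poles of $g_c$ yields, for every $a$,
$$\operatorname{ord}_a F_\lambda=2n_a-2n_{-a}+n_{1-a}+n_{-1-a}-n_{a-1}-n_{a+1}.$$
Hence, if $m_c$ is the corresponding count for $\mu$ and $p_c:=n_c-m_c$, then $\lambda\sim\mu$ is equivalent to the system, which I call $(\star)$,
$$2p_a-2p_{-a}+p_{1-a}+p_{-1-a}-p_{a-1}-p_{a+1}=0\qquad\text{for all }a.$$

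Next I would read off the weight side. By \eqref{wt123} we have $\text{wt}(\lambda)-\text{wt}(\mu)=-\sum_c p_c\,\alpha_c\in Q$; since $Q^\theta=\sum_{i\in I}\mathbb Z(\alpha_i+\alpha_{-i})$ and the $\alpha_a$ form a $\mathbb Z$-basis of $Q$, this lies in $Q^\theta$ exactly when $p_a=p_{-a}$ for all $a$ (if $\delta$ is even), respectively when $p_a=p_{-a}$ for $a\neq0$ and $p_0\in2\mathbb Z$ (if $\delta$ is odd). In both cases $\bar{\text{wt}}(\lambda)=\bar{\text{wt}}(\mu)$ forces the odd part $o_a:=\tfrac12(p_a-p_{-a})$ of $p$ to vanish. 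For part (1) it then remains to substitute an even $p$ into $(\star)$: the relations $p_{1-a}=p_{a-1}$ and $p_{-1-a}=p_{a+1}$ make the four middle terms cancel in pairs while $2p_a-2p_{-a}=0$, so $(\star)$ holds and $\lambda\sim\mu$. This is insensitive to the parity of $\delta$, which is why (1) needs no hypothesis.

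The content of the proposition is part (2), and here the key move is to split $(\star)$ into its even and odd parts in $a$. The even part cancels identically, and what survives is the single recurrence $2o_a=o_{a-1}+o_{a+1}$ for all $a$; that is, $o$ has vanishing second difference along the coset $\tfrac12+\mathbb Z$ on which it is supported, so $o$ is an affine function of $a$ there. Since $\lambda$ and $\mu$ are genuine partitions, $p$---and hence $o$---has finite support, and a finitely supported affine function is identically zero. Thus $o\equiv0$, i.e.\ $p_a=p_{-a}$ for all $a$, which for even $\delta$ is exactly $\bar{\text{wt}}(\lambda)=\bar{\text{wt}}(\mu)$. I expect the main obstacle to be precisely the parity bookkeeping at this last step: the same recurrence forces $o\equiv0$ for odd $\delta$ as well, so $\lambda\sim\mu$ still gives an even $p$, but for odd $\delta$ the class in $P_\theta$ additionally remembers the parity of $p_0$, which $(\star)$ cannot detect; this is why the converse can fail for odd $\delta$ and why part (2) is stated only in the even case (cf.\ Remark~\ref{fre}).
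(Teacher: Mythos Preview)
Your proof is correct. Both your argument and the paper's hinge on the same observation---that $F_\lambda$ is determined by its divisor and hence $\lambda\sim\mu$ is a linear condition on the content multiplicities $p_c=n_c-m_c$---but the execution differs. The paper notices that the divisor of each factor $g_c$ is exactly $\alpha_c-\alpha_{-c}$ (using $\alpha_a=2\omega_a-\omega_{a-1}-\omega_{a+1}$), so that $F_\lambda=F_\mu$ gives $\sum_c p_c(\alpha_c-\alpha_{-c})=0$ and hence $p_c=p_{-c}$ in one line; for even $\delta$ this is precisely $\bar{\mathrm{wt}}(\lambda)=\bar{\mathrm{wt}}(\mu)$. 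You instead write out $\operatorname{ord}_aF_\lambda$ explicitly, split into even/odd parts, and solve the resulting second-difference recurrence $2o_a=o_{a-1}+o_{a+1}$ using finite support. Your route is a bit longer but entirely self-contained and makes the failure for odd $\delta$ transparent: the system $(\star)$ only sees the odd part $o$, so the parity of $p_0$ (which is what $P_\theta$ remembers when $0\in I$) is invisible---consistent with Example~\ref{fre}. For part~(a) you check $(\star)$ directly under $p_a=p_{-a}$, while the paper phrases it as ``the content multisets differ by pairs $\{i,-i\}$'' and uses $g_i g_{-i}=1$; these are the same computation.
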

\begin{proof}Suppose $(\lambda, \mu)\in \Lambda_n\times \Lambda_m$ and  $n\geq m$. If $\bar{\text{wt}}(\lambda )=\bar{\text{wt}}(\mu) $ , by   \eqref{wt123}, we have  $n=m+2s$ for some $s\geq 0$. Furthermore,  up to a permutation,  the sequence $(c_\delta(y))_{y\in \mu}$ can be obtained from $(c_\delta(y))_{y\in \lambda}$ by removing  some $\pm i_j\in I$, $1\leq j\leq s$. By \eqref{con2}, $C(u)|_{\Delta(\lambda)}=C(u)|_{\Delta(\mu)}$, and   (a) follows.

  For any rational function $f(u)$, define
$$\text{wt }(f(u)):=\sum_{a\in \mathbb C}(z_a-p_a)\omega_a,$$
where $z_a$ (resp., $p_a$) is the multiplicity of $a$ as a zero (resp., pole) of $f(u)$. We call $\text{wt }(f(u))$  the weight of $f(u)$.
So, $\text{wt}(\gamma_a(u))=\alpha_a-\alpha_{-a}$, where $\gamma_a(u)$ is given in Lemma~ \ref{change}.
 Suppose  $\lambda\sim\mu$. By \eqref{con2},
 \begin{equation}\label{djei}\prod_{y\in \lambda}\frac{(u+c_\delta(y))^2-1}{(u-c_\delta(y))^2-1}\frac{(u-c_\delta(y))^2}{(u+c_\delta(y))^2}=\prod_{y\in \mu}\frac{(u+c_\delta(y))^2-1}{(u-c_\delta(y))^2-1}\frac{(u-c_\delta(y))^2}{(u+c_\delta(y))^2}.
 \end{equation}
 The weight of the LHS of  \eqref{djei} is $\sum_{y\in \lambda}\alpha_{c_\delta(y)}-\sum_{y\in\lambda}\alpha_{-c_\delta(y)}$, while the weight of the RHS
 is $\sum_{y\in \mu}\alpha_{c_\delta(y)}-\sum_{y\in\mu}\alpha_{-c_\delta(y)}$. This proves that
    \begin{equation}\label{kkkl} 2\sum_{y\in \lambda}\alpha_{c_\delta(y)}-2\sum_{y\in \mu}\alpha_{c_\delta(y)}=
    \sum_{i\in I} b_i (\alpha_i+\alpha_{-i})\end{equation}
    for some $b_i\in \mathbb Z$.
    When $\delta$ is even, $I=\frac12+\mathbb Z$. In this case,  all $b_i$'s  in \eqref{kkkl} has to  be even, forcing
    $$\sum_{y\in \lambda}\alpha_{c_\delta(y)}\equiv \sum_{y\in \mu}\alpha_{c_\delta(y)} \text{ (mod $Q^\theta$)}.$$
     Thanks to  \eqref{wt123}, $\bar{\text{wt}}(\lambda )=\bar{\text{wt}}(\mu) $, proving  (b).
\end{proof}
Thanks to Proposition~\ref{cen}, each ``central block" corresponds to a  $\mathfrak g_\delta$-weight space of $(\frac \delta2 -1)$th semi-infinite wedge space if $\delta$ is even.
In the remaining case, a ``central block" may be bigger than
  a $\mathfrak g_\delta$-weight space of $(\frac \delta2 -1)$th semi-infinite wedge space. This can be seen by the following example.

\begin{example}\label{fre} Suppose   $\delta=1$, $\lambda=(2, 2)\in \Lambda_4$ and $\lambda=(2, 1)\in \Lambda_3$. By \eqref{con2}, $$C(u)|_{\Delta(\lambda)}=C(u)|_{\Delta(\mu)}=(\frac{1}{2}-u)(\frac{1}{2}+u),$$ and hence $\lambda\sim \mu$. However,  by \eqref{wt123}, $\text{wt}(\lambda)-\text{wt}(\mu)=-\alpha_0\notin Q^\theta$.
\end{example}
~~~~\\
\noindent\textbf{Data Availability}~~\quad ~Data sharing not applicable to this article; calculations follow from results recorded in the
text of the article.

\noindent\textbf{Ethical Statement/Conflict of Interest}~\quad~~ We certify that this manuscript is original and has not been published and will not be submitted elsewhere for publication while being considered by Algebras and Representation Theory.  And the study is not split up into several parts to increase the quantity of submissions and submitted to various journals or to one journal over time. No data have been fabricated or manipulated (including images) to support our conclusions. No data, text, or theories by others are presented as if they were our own.
The submission has been received explicitly from all co-authors. And authors whose names appear on the submission have contributed sufficiently to the scientific work and therefore share collective responsibility and accountability for the results. The authors declare that they have no conflict of interest.
This article does not contain any studies with human participants or animals performed by any of the authors.
Informed consent was obtained from all individual participants included in the study.

\end{document}